\newtheorem{thm}{Theorem}[section]
\newtheorem{lem}[thm]{Lemma}
\newtheorem{prop}[thm]{Proposition}
\newtheorem{defn}[thm]{Definition}
\newtheorem{rem}[thm]{Remark}
\begin{document}

\begin{center}
{\Large \bf Cluster-tilted algebras and their intermediate coverings
}
\bigskip

{\large
 Bin Zhu\footnote{Supported by the NSF of China (Grants 10771112) }}
\bigskip

{\small
 Department of Mathematical Sciences,
 Tsinghua University,
   100084 Beijing, P. R. China

 {\footnotesize E-mail: bzhu@math.tsinghua.edu.cn} }
\bigskip

%\today

\end{center}

\def\s{\stackrel}
\def\gama{\gamma}
\def\Longrightarrow{{\longrightarrow}}
\def\P{{\cal P}}
\def\A{{\cal A}}
\def\F{\mathcal{F}}
\def\X{\mathcal{X}}
\def\T{\mathcal{T}}
\def\m{\textbf{ M}}
\def\t{{\tau }}
\def\b{\textbf{d}}
\def\K{{\cal K}}

\def\G{{\Gamma}}
\def\e{\mbox{exp}}

\def\righta{\rightarrow}

\def\s{\stackrel}

\def\ncong{\not\cong}

\def\mathbb{\NN}

\def\Hom{\mbox{Hom}}
\def\Ext{\mbox{Ext}}
\def\ind{\mbox{ind}}
\def\coprod{\amalg }
\def\L{\Lambda}
\def\c{\circ}
\def\mu{\multiput}

\renewcommand{\mod}{\operatorname{mod}\nolimits}
\newcommand{\add}{\operatorname{add}\nolimits}
\newcommand{\Rad}{\operatorname{Rad}\nolimits}
\newcommand{\RHom}{\operatorname{RHom}\nolimits}
\newcommand{\uHom}{\operatorname{\underline{Hom}}\nolimits}
\newcommand{\End}{\operatorname{End}\nolimits}
\renewcommand{\Im}{\operatorname{Im}\nolimits}
\newcommand{\Ker}{\operatorname{Ker}\nolimits}
\newcommand{\Coker}{\operatorname{Coker}\nolimits}
\renewcommand{\r}{\operatorname{\underline{r}}\nolimits}
\def \text{\mbox}

\hyphenation{ap-pro-xi-ma-tion}

\begin{abstract} We construct the intermediate coverings of
cluster-tilted algebras by defining the generalized cluster
categories. These generalized cluster categories are Calabi-Yau
triangulated categories with fraction CY-dimension and have also
cluster tilting objects (subcategories). Furthermore we study the
representations of these intermediate coverings of cluster-tilted
algebras.

\end{abstract}

\textbf{Key words.} Generalized cluster categories, cluster-tilted
algebras, cluster tilting objects(subcategories), coverings.

\medskip

\textbf{Mathematics Subject Classification.} 16G20, 16G70.

\medskip

\section{Introduction}

 Cluster-tilted algebras are by definition, the endomorphism algebras of cluster tilting objects
 in the cluster categories of hereditary algebras. They together with cluster categories
provide an algebraic understanding of combinatorics of cluster
algebras defined and studied by Fomin and Zelevinsky in [FZ].
 In this connection, the indecomposable exceptional objects in cluster categories
 correspond to the cluster variables, and cluster tilting objects(= maximal $1-$orthogonal subcategories [I1, I2])
 to clusters of corresponding cluster algebras, see
[CK1, CK2].  Cluster categories are the orbit
 categories $\frac{D^{b}(H)}{<F>}$ of derived categories $D^{b}(H)$ of
 a hereditary algebra $H$ by an automorphism group generated by $F=\tau ^{-1}[1]$ where $\tau$
 is the Auslander-Reiten translation in $D^{b}(H)$, $[1]$ is the shift functor of
 $D^{b}(H).$ They are triangulated categories  and are Calabi-Yau categories of CY-dimension $2$ [K].
  Moreover by [KR1] or [KZ], cluster-tilted algebras provide a class of Gorenstein
  algebras of Gorenstein dimension $1$, which are important in
  representation theory of algebras [Rin2].

\medskip

Now let $\mathcal{H}$ be a hereditary abelian category with tilting
objects. The endomorphism algebra of a tilting object in
$\mathcal{H}$ is called quasi-tilted algebra, which consists of
tilted algebras and canonical algebras [H2]. From [H2],
$\mathcal{H}$ is  either derived equivalent to mod$ H $ of a
hereditary algebra $H$  or to the category $coh P$  of coherent
sheaves over a weighted projective line $P$. The later is derived
equivalent to the module categories of canonical algebras [Rin1].
From such a hereditary abelian category $\mathcal{H}$, one can also
define cluster category $\mathcal{C(H)}$ as the orbit category of
$D^b(\mathcal{H})$ by $\tau^{-1}[1]$. It shares the same tilting
theory as the classic case by [BMRRT][Zh]. It was shown that any
cluster tilting object is induced from a tilting object of a
hereditary abelian category which is derived equivalent to
$\mathcal{H}$.  The endomorphism algebra of a tilting object in
$\mathcal{C(H)}$ is now
 called a cluster-tilted algebra of type $\mathcal{H}$.
\medskip

  The aim is to study the
cluster-tilted algebras in this general setting: Firstly, for any
positive integer $m$, we associate a generalized cluster category
which is defined as the orbit categories of derived categories
$D^b(\mathcal{H})$ by the group $<F^m>$ generated by $F^m$. It is a
triangulated category by Keller [K], which are Calabi-Yau categories
of Calabi-Yau dimension $2m/m$. The cluster tilting objects in this
generalized cluster category are shown to correspond one-to-one to
ones in the classical cluster categories; the endomorphism algebras
of cluster tilting objects in $D^b(\mathcal{H})/<F^m>$ are the
coverings of the cluster-tilted algebras. They all share an
universal covering: the endomorphism algebra of corresponding
cluster tilting subcategory in $D^b(\mathcal{H})$.
 Secondly, we study the problem when the endomorphism algebras of rigid objects are cluster-tilted algebras.
  It was pointed by Buan, Marsh and Reiten in [BMR2] that there are examples of rigid objects in cluster categories
  whose endomorphism algebras are not again
  cluster-tilted.  We generalize the Assem-Bruestle-Schiffler's  characterization [ABS1] of cluster-tilted algebras
  to cluster-tilted algebras of type $\mathcal{H}$, i.e., we show
that the trivial extension algebra $A=B\ltimes Ext^2_B(DB,B)$ is a
cluster-tilted algebra of type $\mathcal{H}$ if and only if $B$ is a
quasi-tilted algebra. Using this characterization, we prove that the
endomorphism algebra of certain  rigid object (triangular rigid, see
Section 3 for details) is a cluster-tilted
 algebra.

The article is organized as follows:

In Section 2 we collect basic material on cluster tilting
subcategories (objects) in triangulated categories.

Section 3 contains a generalization of Assem-Bruestle-Schiffler's
characterization [ABS1] of cluster-tilted algebras to the general
case: the cluster-tilted algebras of type $\mathcal{H}$, i.e., we
show that the trivial extension algebra $A=B\ltimes Ext^2_B(DB,B)$
is a cluster-tilted algebra of type $\mathcal{H}$ if and only if $B$
is a quasi-tilted algebra.
 The main result is that
 the endomorphism algebra of a triangular rigid object (see
Section 3 for details) is a cluster-tilted algebra.

In Section 4 we first introduce the generalized cluster categories
$\mathcal{C}_{F^m}(\mathcal{H})$, which are triangulated categories
and are coverings of the corresponding cluster categories
$\mathcal{C(H)}$. We then study cluster tilting theory in these
triangulated categories. We show that the covering functors $\pi_m:
D^b(\mathcal{H})\rightarrow \mathcal{C}_{F^m}(\mathcal{H})$ and
$\rho_m: \mathcal{C}_{F^m}(\mathcal{H})\rightarrow \mathcal{C(H)}$
induce covering functors from the subcategory of projective modules
of the endomorphism algebra of cluster tilting subcategory in
$D^b(\mathcal{H})$ to the subcategory of projective modules of the
generalized cluster-tilted algebra of a tilting object in
$\mathcal{C}_{F^m}(\mathcal{H})$, respectively, from the subcategory
of projective modules of the generalized cluster-tilted algebra of a
tilting object in $\mathcal{C}_{F^m}(\mathcal{H})$  to the
cluster-tilted algebra of the corresponding cluster tilting objects
in $\mathcal{C(H)}$; and that it also gives the corresponding
push-down functors between their module categories.

 \medskip

\bigskip

\section{Basics on cluster tilting subcategories}

Let $\mathcal{D}$ be a $k-$linear triangulated category with finite
dimensional Hom-spaces over a field $k$ and with Serre duality. We
assume that
 $\mathcal{D}$ is a Krull-Schmidt-Remark category. Let  $\mathcal{T }$ be a full
subcategory of $\mathcal{D}$ closed under taking direct summands.
The quotient category of $\mathcal{D}$ by $\mathcal{T }$ is denoted
by  $\mathcal{D/T }$, is by definition, a category with the same
objects as $\mathcal{D}$ and the space of morphisms from $X$ to $Y$
is the quotient of group of morphisms from $X$ to $Y$ in
$\mathcal{D}$ by the subgroup consisting of morphisms factor through
an object in $\mathcal{T}$. The quotient $\mathcal{D/T }$ is also an
additive Krull-Schemidt category. For $X, Y\in \mathcal{D}$, we use
$Hom(X, Y)$ to denote $Hom_{\mathcal{D}}(X,Y)$ for simplicity, and
define that $Ext^k(X,Y):=Hom(X,Y[k])$. For a subcategory
$\mathcal{T}$, we say that $Ext^i(\mathcal{T},\mathcal{T})=0$
provided that $Ext^i(X,Y)=0$ for any $X,Y\in \mathcal{T}$. For an
object $T$, add$T$ denotes the full subcategory consisting of direct
summands of direct sum of finitely many copies of $T$. Throughout
the article, the composition of morphisms $f: M\rightarrow N$ and
$g:N\rightarrow L$ is denoted by $fg: M\rightarrow L.$

  \medskip
Fix a triangulated category $\mathcal{D}$, and assume that
$\mathcal{T}$ is a functorially finite subcategory of $\mathcal{D}$.
\medskip

\begin{defn} \begin{enumerate}
\item  $\mathcal{T}$ is called rigid, provided $Ext^1(\mathcal{T},\mathcal{T})=0$; in particular,
 an object $T$ is called rigid provided $Ext^1(T,T)=0$.

\item  $\mathcal{T}$ is called a cluster tilting subcategory provided $X\in \mathcal{T}$ iff
 $Ext^1(X, \mathcal{T})=0$  and  $X\in \mathcal{T}$ iff $Ext^1( \mathcal{T},X)=0$.
 An object $T$ is a cluster tilting object if and
only if add$T$ is a cluster tilting subcategory.

\end{enumerate}

\end{defn}

\begin{rem}
\begin{enumerate}
\item Not all triangulated categories have cluster tilting subcategories, see the fillowing example.

 \textbf{Example} Let $A=kQ/I$ be the self-injective algebra given
by the quiver $Q$

\vspace*{-1cm}
\begin{center}
 \setlength{\unitlength}{0.61cm}
 \begin{picture}(5,4)
 \put(0,2){a}\put(0.4,2.2){$\circ$}
\put(3,2.2){$\circ$}\put(3.4,2){b} \put(0.8,2.5){\vector(3,0){2}}
\put(2.8,2.2){\vector(-1,0){2}}
 \put(1.7,2.7){$\alpha$}\put(1.7,1.5){$\beta$}
 \end{picture}
 \end{center}
\vspace*{-1cm} and the relations $\alpha \beta\alpha, \  \beta\alpha
\beta $.

\newcommand{\Maba}{\begin{array}{c} a \\ b \\ a \end{array}}
\newcommand{\Mbab}{\begin{array}{c} b \\ a \\ b \end{array}}
\newcommand{\Mab}{\begin{array}{c} a \\ b \end{array}}
\newcommand{\Mba}{\begin{array}{c} b \\ a \end{array}}

The Auslander Reiten quiver of $A-mod$ looks as follows:
$$\begin{array}{ccccccccc}
\Mbab &&&& \Maba &&&& \Mbab \\
& \searrow && \nearrow && \searrow && \nearrow & \\
&& \Mba &&&& \Mab && \\
& \nearrow && \searrow && \nearrow && \searrow & \\
a &&&& b &&&& a
\end{array}$$
Here, the first and the last column are identified.

Deleting the first row produces the Auslander Reiten quiver of the
stable category $A-\underline{mod}$:

$$\begin{array}{ccccccccc}
&& \Mba &&&& \Mab && \\
& \nearrow && \searrow && \nearrow && \searrow & \\
a &&&& b &&&& a
\end{array}$$

This stable category ${\mathcal H} = A-\underline{mod}$ of $A$ has
no cluster tilting objects. We note that any indecomposable objects
is a maximal rigid.

\item In a module category $\Lambda-mod$ of a self-injective algebra $\Lambda$, $T\oplus
\Lambda$ is a cluster tilting module (=maximal $2-$orthogonal module
in [I1, I2]) if and only if $T$ is cluster tilting in
$\Lambda-\underline{mod}$.
\end{enumerate}
\end{rem}

\begin{rem} It was proved in [KZ] that if $\mathcal{T}$ is
contravariantly finite and satisfies the condition that $X\in
\mathcal{T}$ if and only if $Ext^1( \mathcal{T},X)=0$, then
$\mathcal{T}$ is a cluster tilting subcategory.\end{rem}
\medskip

For a triangulated category $\mathcal{D}$ with Serre duality
$\Sigma$, $\mathcal{D}$ has Auslander-Reiten triangles $\tau$ and
$\Sigma=\tau [1],$  where $\tau$ is the Auslander-Reiten
translation. Denote by $F=\tau^{-1}[1].$

\begin{lem} Let $\mathcal{D}$ be a triangulated category with Serre duality $\Sigma$, and
$\mathcal{T }$ a cluster tilting subcategory of $\mathcal{D}$. Then
$F\mathcal{T}=\mathcal{T}$.\end{lem}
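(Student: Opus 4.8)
The plan is to combine the defining Ext-vanishing property of a cluster tilting subcategory with Serre duality, rewriting everything in terms of $\tau$, $[1]$ and $\Sigma=\tau[1]$. I would first prove $F\mathcal{T}\subseteq\mathcal{T}$. Let $X\in\mathcal{T}$; since the defining property of $\mathcal{T}$ is a statement about every object of $\mathcal{D}$, to conclude $FX\in\mathcal{T}$ it suffices to verify $\Ext^1(FX,Y)=0$ for all $Y\in\mathcal{T}$. Using $F=\tau^{-1}[1]$ and shift-invariance of $\Hom$, one has $\Ext^1(FX,Y)=\Hom(\tau^{-1}X[1],Y[1])\cong\Hom(\tau^{-1}X,Y)$, and Serre duality gives $\Hom(\tau^{-1}X,Y)\cong D\Hom(Y,\Sigma\tau^{-1}X)=D\Hom(Y,X[1])=D\Ext^1(Y,X)$. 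The last space is zero because $\mathcal{T}$, being cluster tilting, is in particular rigid (apply the forward implication $X\in\mathcal{T}\Rightarrow\Ext^1(X,\mathcal{T})=0$). Hence $\Ext^1(FX,\mathcal{T})=0$, and therefore $FX\in\mathcal{T}$ by the characterization of membership in $\mathcal{T}$.

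For the reverse inclusion $\mathcal{T}\subseteq F\mathcal{T}$, equivalently $F^{-1}\mathcal{T}\subseteq\mathcal{T}$, I would run the symmetric argument using the second half of the definition. For $X,Y\in\mathcal{T}$, since $F^{-1}=\tau[-1]$ we get $\Ext^1(Y,F^{-1}X)=\Hom(Y,F^{-1}X[1])=\Hom(Y,\tau X)$, and Serre duality in the form $\Hom(X[-1],Y)\cong D\Hom(Y,\Sigma X[-1])=D\Hom(Y,\tau X)$ identifies $\Hom(Y,\tau X)$ with $D\Hom(X[-1],Y)=D\Ext^1(X,Y)=0$. Thus $\Ext^1(\mathcal{T},F^{-1}X)=0$, so $F^{-1}X\in\mathcal{T}$. (Alternatively, once $F\mathcal{T}\subseteq\mathcal{T}$ is known, one can observe that $F$ is an autoequivalence commuting with $[1]$ and $\tau$, hence $F\mathcal{T}$ is again a cluster tilting subcategory; and a cluster tilting subcategory contained in another one must equal it, since for $X$ in the larger one $\Ext^1(X,-)$ vanishes on the smaller one, forcing $X$ into it.) Either way, $F\mathcal{T}=\mathcal{T}$.

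I do not expect a serious obstacle: the content is purely the Serre-duality bookkeeping converting $\Hom(FX,-[1])$ and $\Hom(-,F^{-1}X[1])$ into $\Ext^1$-spaces between objects of $\mathcal{T}$, which vanish by rigidity. The only point deserving care is that the axiom characterizing membership in $\mathcal{T}$ is invoked for the \emph{a priori arbitrary} objects $FX$ and $F^{-1}X$, which is legitimate precisely because that axiom is stated for all objects of $\mathcal{D}$.
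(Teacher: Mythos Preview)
Your argument is correct: the two inclusions follow cleanly from Serre duality and the two halves of the cluster tilting definition, exactly as you wrote. Note that the paper does not actually supply a proof here---it simply cites [KZ] and [IY]---so your self-contained argument is more than what the paper itself provides; it is essentially the standard computation one finds in those references.
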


\medskip
\begin{proof} The assertion was proved in [KZ] or [IY].

\end{proof}

\medskip

The following results were proved in [KZ]

\begin{thm} Let $T$ be a cluster tilting object of a
triangulated category $\mathcal{D}$, and $A=End_{\mathcal{D}}T$.
Then the followings hold:

\begin{enumerate}

\item{} The functor $Hom(T,-):\mathcal{D}\rightarrow modA$ induces a equivalence $\mathcal{D}/add(T[1])\cong A-mod $,
 and $A$ is a Gorenstein algebra of Gorenstein dimension at most $1$.

\item{} Assume that the field $k$ is algebraically closed.
If $B=End_{\mathcal{D}}T'$ is the endomorphism algebra of  another
cluster tilting object $T'$, then $A$ and $B$
 have same representation type.

\end{enumerate}

\end{thm}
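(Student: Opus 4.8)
The plan is to reconstruct the argument of [KZ] (together with [KR1] for the Gorenstein statement), organised around a single approximation lemma. First I would show that every $X\in\mathcal{D}$ fits into a triangle $T_1\rightarrow T_0\rightarrow X\rightarrow T_1[1]$ with $T_0,T_1\in\add T$: choose a right $\add T$-approximation $T_0\rightarrow X$ (available since $\add T$ is functorially finite), complete it to a triangle $Y\rightarrow T_0\rightarrow X\rightarrow Y[1]$, and check by applying $\Hom(T,-)$ that $\Ext^1(\add T,Y)=0$, whence $Y\in\add T$ by the defining property of a cluster tilting subcategory. Applying $\Hom(T,-)$ to such a triangle and using $\Ext^1(T,\add T)=0$ then gives a projective presentation $\Hom(T,T_1)\rightarrow\Hom(T,T_0)\rightarrow\Hom(T,X)\rightarrow 0$ over $A=\End T$, so the functor genuinely lands in $\mod A$.

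Next I would show that the induced functor $\Hom(T,-)\colon\mathcal{D}/\add(T[1])\rightarrow\mod A$ is fully faithful and dense. For faithfulness: if $\Hom(T,f)=0$ for $f\colon X\rightarrow Y$, then $af=0$ for the approximation $a\colon T_0\rightarrow X$ (since $\Hom(T,-)$ is faithful on morphisms out of $\add T$ by Yoneda), so $f$ factors through $X\rightarrow T_1[1]\in\add(T[1])$; conversely any morphism factoring through $\add(T[1])$ is killed by $\Hom(T,-)$ because $\Ext^1(T,\add T)=0$. For fullness: lift a given $A$-linear map along the two projective presentations, using that $\Hom(T,-)$ is full on $\add T$, then complete to a morphism of triangles and verify it induces the prescribed map (using that $\Hom(T,T_0)\rightarrow\Hom(T,X)$ is epi). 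For density: realise any projective presentation of an $A$-module as $\Hom(T,-)$ of a morphism in $\add T$ and take its cone. This proves $\mathcal{D}/\add(T[1])\simeq\mod A$. For the Gorenstein statement I would follow [KR1]: using the Serre duality isomorphism $D\Ext^1_{\mathcal{D}}(X,Y)\cong\Hom(Y,\tau X)$ together with the approximation triangles, compute minimal injective coresolutions of $A$ as a left and as a right module and see that each has length at most $1$; hence $A$ has Gorenstein dimension at most $1$.

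For part (2), part (1) identifies $\mod A$ with $\mathcal{D}/\add(T[1])$ and $\mod B$ with $\mathcal{D}/\add(T'[1])$, so the indecomposables of $\mod A$ and of $\mod B$ are obtained from those of $\mathcal{D}$ by deleting the finitely many indecomposable summands of $T[1]$, respectively $T'[1]$; in particular $A$ is representation-finite iff $\mathcal{D}$ has only finitely many indecomposables iff $B$ is. For the tame/wild alternative — and this is exactly where I would use that $k$ is algebraically closed, so that the Drozd dichotomy applies — I would reduce to the case where $T'$ is the mutation of $T$ at one summand (any two cluster tilting objects being linked by a finite chain of such mutations in the relevant setting), invoke the Buan-Marsh-Reiten near-Morita equivalence $\mod A/\add S\simeq\mod A^{*}/\add S^{*}$ at the mutated vertex, and argue that a representation embedding witnessing wildness can be chosen to avoid these simple modules generically, hence survives the passage to the quotient, and symmetrically that parametrizing families descend, so that $A$ is tame (resp. wild) iff $A^{*}$ is. The main obstacle is precisely this transfer of the tame/wild type: deleting finitely many indecomposables, equivalently the single-mutation step, is formally harmless for representation-finiteness, but to exclude that it turns a tame algebra into a wild one genuinely requires the algebraically closed hypothesis and a careful use of the near-Morita equivalence; everything in part (1), by contrast, is a routine if lengthy diagram chase once the approximation lemma is established.
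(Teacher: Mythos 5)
Your reconstruction of part (1) is sound and follows the same route as the source the paper cites ([KZ, Theorem 3.3] together with [KR1] for the Gorenstein statement): the approximation triangle $T_1\to T_0\to X\to T_1[1]$ with $T_0,T_1\in\add T$, the identification of the kernel of $\Hom(T,-)$ on morphisms with the ideal of maps factoring through $\add(T[1])$, and density via projective presentations are exactly the standard argument, and your Serre-duality computation of the injective coresolutions is the right way to get Gorenstein dimension at most $1$.

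Part (2) is where there is a genuine gap. Your plan reduces to the case where $T'$ is obtained from $T$ by a single mutation, ``any two cluster tilting objects being linked by a finite chain of such mutations in the relevant setting.'' But Theorem 2.5 is stated for an \emph{arbitrary} Hom-finite Krull--Schmidt triangulated category $\mathcal{D}$ with Serre duality, and in that generality the connectedness of the exchange graph of cluster tilting objects is not available (it is a theorem for cluster categories of hereditary algebras and some other specific classes, but not a formal consequence of the axioms you are allowed to use here). Moreover the detour through mutation is unnecessary: part (1) already exhibits $\mod A\simeq\mathcal{D}/\add(T[1])$ and $\mod B\simeq\mathcal{D}/\add(T'[1])$ as two quotients of the \emph{same} category by finitely many indecomposables, so one can pass directly to the common quotient $\mathcal{D}/\add(T[1]\oplus T'[1])$ and compare $\ind A$ and $\ind B$ there, with no chain of mutations; this is the argument of [KZ, Proposition 4.8]. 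The genuinely delicate point --- that deleting finitely many indecomposables cannot change the tame/wild type, which is where algebraic closedness and the Drozd dichotomy enter --- remains the same in either approach, and your treatment of it (``a representation embedding can be chosen to avoid these simples generically'') is only a gesture: you still need to lift indecomposables and one-parameter families back and forth across the additive quotient, since $\mathcal{D}/\add(T[1]\oplus T'[1])$ is not itself a module category. So the correct fix is to drop the mutation reduction entirely, work with the common quotient, and then spell out the finite-deletion argument for each of the three representation types.
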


\medskip

 Let $T=T_1\oplus T'$ be a cluster tilting object of a
triangulated category $\mathcal{D}$, where $T_1$ is indecomposable
object. Let $T_1^*\rightarrow E\s{f}{\rightarrow}T_1 \rightarrow
T_1^*[1]$ be the triangle with $f$ the minimal right
$addT'-$approximation of $T_1$. It follows from [IY] that
$T^*=T_1^*\oplus T'$ is a cluster tilting object and there is a
triangle $T_1\rightarrow E'\s{g}{\rightarrow}T_1^* \rightarrow
T_1[1]$  with $g$ being the minimal right $addT'-$approximation of
$T_1^*$. Let $A$, $B$ be the endomorphism algebras of cluster
tilting objects $T$, $T^*$ respectively. Denote by $S_{T_1}$, (or
$S_{T_1^*}$) the simple $A-$module corresponding to $T_1$ (resp.
simple $B-$module corresponding to $T_1^*$). The following
proposition is a generalization of Proposition 2.2 in [KR1].

\medskip

\begin{prop} Let $T$ and $T^*$ be as above. Then $modA/ add S_{T_1}\approx mod B/add S_{T_1^*}.$

\end{prop}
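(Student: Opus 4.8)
The plan is to use the equivalence from Theorem 2.4(1) to transport the question from the triangulated category $\mathcal{D}$ to the module categories $\mod A$ and $\mod B$, and then exploit the fact that $T$ and $T^*$ differ in only one indecomposable summand. Concretely, $Hom(T,-)$ induces $\mathcal{D}/\add(T[1])\cong \mod A$, and similarly $Hom(T^*,-)$ induces $\mathcal{D}/\add(T^*[1])\cong \mod B$. Since $T[1]$ and $T^*[1]$ have the common summand $T'[1]$ and differ only in $T_1[1]$ versus $T_1^*[1]$, I would first show that passing to the further quotient by $\add(T_1^*[1])$ on the $A$-side kills exactly the simple $S_{T_1}$. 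That is, the image of $\add(T_1^*[1])$ under $Hom(T,-)$ is $\add S_{T_1}$: indeed $Hom(T,T_1^*[1]) = Ext^1(T,T_1^*)$, and from the defining triangle $T_1^*\to E\to T_1\to T_1^*[1]$ with $E\in\add T'$ one computes $Ext^1(T,T_1^*)$ is one-dimensional and corresponds to the top of the projective $Hom(T,T_1)$ modulo the radical generated by maps factoring through $\add T'$ — i.e. to $S_{T_1}$. Symmetrically, using the triangle $T_1\to E'\to T_1^*\to T_1[1]$, the image of $\add(T_1[1])$ under $Hom(T^*,-)$ is $\add S_{T_1^*}$.

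Granting that, the result follows by a chain of equivalences: both $\mod A/\add S_{T_1}$ and $\mod B/\add S_{T_1^*}$ should be identified with $\mathcal{D}/\add(T[1]\oplus T^*[1]) = \mathcal{D}/\add(T_1[1]\oplus T_1^*[1]\oplus T'[1])$. On the $A$-side, $\mod A \cong \mathcal{D}/\add(T[1])$, and killing the extra simple $S_{T_1}$ corresponds, under this equivalence, to further quotienting by $\add(T_1^*[1])$, giving $\mathcal{D}/\add(T[1]\oplus T_1^*[1])$; since $T[1] = T_1[1]\oplus T'[1]$, this is exactly $\mathcal{D}/\add(T_1[1]\oplus T_1^*[1]\oplus T'[1])$. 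The same computation on the $B$-side, using $\mod B\cong\mathcal{D}/\add(T^*[1])$ and $T^*[1]=T_1^*[1]\oplus T'[1]$, yields the identical quotient. Composing the two equivalences with the one in the middle gives $\mod A/\add S_{T_1}\approx\mod B/\add S_{T_1^*}$.

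The main obstacle I anticipate is the first step: verifying cleanly that the additive subcategory generated by $Hom(T,T_1^*[1])$ inside $\mod A$ really is $\add S_{T_1}$ (rather than a larger subcategory), and dually on the $B$-side. This requires knowing that $f$ and $g$ are \emph{minimal} right $\add T'$-approximations, so that the connecting maps carry no superfluous summands, and it requires the vanishing $Ext^1(T',T_1^*) = 0 = Ext^1(T_1,T_1^*)$ that comes from $T$ and $T^*$ both being rigid (cluster tilting); one then reads off that $Hom(T,T_1^*[1])$ is a simple $A$-module supported at the vertex corresponding to $T_1$. This is precisely the point where Proposition 2.2 of [KR1] is generalized, so I would follow that argument closely, replacing the single indecomposable complement in their setting by the (possibly larger) complement $T'$ and checking that every step only uses functorial finiteness and the cluster tilting hypotheses already available here. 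Once this identification is in place, the rest is the formal quotient-of-quotient bookkeeping described above.
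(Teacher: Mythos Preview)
Your approach is essentially the same as the paper's: both identify $\mod A/\add S_{T_1}$ and $\mod B/\add S_{T_1^*}$ with the common quotient $\mathcal{D}/\add(T'[1]\oplus T_1[1]\oplus T_1^*[1])$ via the equivalence of Theorem~2.5, and both reduce the argument to the identification $\Hom(T,T_1^*[1])\cong S_{T_1}$ (and its dual). The paper dispatches this last identification by citing Lemma~4.1 of [BMR1], whereas you sketch it directly from the defining triangles; note only that your claim that $\Ext^1(T,T_1^*)$ is ``one-dimensional'' is not quite right over a non-algebraically-closed field---what you actually need, and what holds, is that it is the simple $A$-module at the vertex $T_1$.
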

\begin{proof} Denote by $G=\mbox{Hom}(T,-).$ The
induced functor $\bar{G}: \mathcal{D)}/\mbox{add}(T[1]) \rightarrow
A-\mbox{mod}$ is an equivalence by Theorem 2.5. We consider the
composition of functor $\bar{G}$ with the quotient functor $Q:
A-\mbox{mod} \rightarrow \frac{\Lambda
-\mbox{mod}}{\mbox{add}(\mbox{Hom}(T, T_1^*[1]))}$, which is denoted
by $G_1$. The functor $G_1$ is full and dense since $\bar{G}$ and
$Q$ are. Under the equivalence $\bar{G}$, $T_1^*[1]$ corresponds to
Hom$(T,  T_1^*[1]).$  For any morphism $f: X\rightarrow Y$ in the
category $\frac{\mathcal{D}}{\mbox{add}(T[1])}$, $\bar{G}(f):
G(X)\rightarrow G(Y)$ factors through add$(\mbox{Hom}(T,T_1^*[1]))$
if and only if $f$ factors through add$T_1^*[1].$ Then $G_1$ induces
an equivalence, denoted by $\bar{G_1},$  from the category
$\frac{\cal{D}}{\mbox{add}(T[1]\oplus T_1^*[1]))}$ to  the category
$\frac{A -\mbox{mod}}{\mbox{add}(\mbox{Hom}(T,\tau T_1^*[1]))}.$
 Therefore we have that $\frac{A-\mbox{mod}}{\mbox{add(Hom}(T, T^*_1[1]))} \approx
\frac{B-\mbox{mod}}{\mbox{add(Hom}(T',T_1[1]))}.$ It is easy to
prove that $\mbox{Hom}(T, T_1^*[1]))\cong S_{T_1}$ and Hom$(T^*,
T_1[1])\cong S_{T_1^*}$ (compare Lemma 4.1 in [BMR1]). Then
 $modA/ add S_{T_1}\approx B/add S_{T_1^*}.$
The proof is finished.

\end{proof}
\section{Cluster-tilted algebras of type $\mathcal{H}$}

In this section, $\mathcal{H}$ will denote a hereditary $k-$linear
category with finite dimensional Hom-spaces and Ext-spaces. We also
assume that $\mathcal{H}$ has tilting objects. The endomorphism
algebra of  tilting object $T$ in $\mathcal{H}$ is called a
quasi-tilted algebra. Since $\mathcal{H}$ has tilting objects,
$D^b(\mathcal{H})$ has Serre duality, and has also Auslander-Reiten
triangles, the Auslander-Reiten translation is denoted by $\tau$.
Let $F=\tau^{-1}[1]$ be the automorphism of the bounded derived
category $D^b(\mathcal{H})$. We call the orbit category
$D^b(\mathcal{H})/<F>$ the cluster category of type $\mathcal{H},$
which is denoted by $\mathcal{C(H)}$ [BMRRT], [CCS1]. For cluster
tilting theory
 in the cluster category $\mathcal{C(H)}$, we refer [BMRRT, Zh]. The
endomorphism algebra $End_{\mathcal{C(H)}}T$ of a cluster tilting
object $T$ in $\mathcal{C(H)}$ is called a cluster-tilted algebra of
type $\mathcal{H}$. When $\mathcal{H}$ is the module category over a
hereditary algebra $H=kQ$, we call the corresponding orbit category
the cluster category of $H$ or of $Q$. In this case the endomorphism
algebra of a cluster tilting object is called a cluster-tilted
algebras of $H$, or simply of $Q$ [BM, BMR, Zh, ABS1, ABS2].

Now we give a characterization of cluster-tilted algebras of type
$\mathcal{H}$, which generalizes some results in [ABS1], [Zh].
\medskip

Given any finite-dimensional algebra $B$, from the $B-$bimodule
Ext$^2(DB,B)$, one can form the trivial extension algebra of $B$
with the bimodule Ext$^2(DB,B)$: $A=B\ltimes \mbox{Ext}^2(DB,B)$. It
was proved that this trivial extension algebra is a cluster-tilted
algebra of $H$ if and only if $B$ is a tilted algebra [ABS1], which
can be viewed as a completion of the description of cluster-tilted
algebras given in [Zh]. In the following, we generalize the
characterization of cluster-tilted algebras to the cluster-tilted
algebras of type $\mathcal{H}$.

\begin{prop} Let $A=B\ltimes
\mbox{Ext}^2(DB,B)$. Then $A$ is a cluster-tilted algebra of type
$\mathcal{H}$ for some hereditary abelian category
 $\mathcal{H}$ if and only if $B$ is a quasi-tilted algebra, i.e.
 the endomorphism algebra of a tilting object in $\mathcal{H}$.

\end{prop}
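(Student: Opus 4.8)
The plan is to transport the Assem--Br\"ustle--Schiffler argument of [ABS1] (and its refinement in [Zh]) to the derived category $D^b(\mathcal{H})$; the one genuinely new feature is that $\mathcal{H}$ need not be a module category, but every computation below uses only that $\mathcal{H}$ is hereditary and admits tilting objects, so that $D^b(\mathcal{H})$ has Serre duality and $F=\tau^{-1}[1]$ makes sense.

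\textbf{The ``if'' direction.} Suppose $B$ is quasi-tilted, say $B=\End_{\mathcal{H}}T_{0}$ for a tilting object $T_{0}$ of a hereditary abelian category $\mathcal{H}$ with tilting objects. Then the derived functor $\RHom(T_{0},-)\colon D^{b}(\mathcal{H})\to D^{b}(B)$ is a triangle equivalence sending $T_{0}$ to $B$, and, since any triangle equivalence between categories with Serre duality commutes with the Serre functor (hence with $\tau$ and with $F$), it intertwines $F$ on $D^{b}(\mathcal{H})$ with $F_{B}$ on $D^{b}(B)$. The image $\overline{T_{0}}$ of $T_{0}$ in $\mathcal{C}(\mathcal{H})=D^{b}(\mathcal{H})/\langle F\rangle$ is a cluster tilting object [BMRRT, Zh], and by the description of morphisms in an orbit category
$$\End_{\mathcal{C}(\mathcal{H})}\overline{T_{0}}=\bigoplus_{i\in\mathbf{Z}}\Hom_{D^{b}(\mathcal{H})}(T_{0},F^{i}T_{0})\cong\bigoplus_{i\in\mathbf{Z}}\Hom_{D^{b}(B)}(B,F_{B}^{i}B).$$
Since $B$ has finite global dimension, its Serre functor is $\nu_{B}=-\otimes^{L}_{B}DB$, so $F_{B}=\nu_{B}^{-1}[2]$ and $\Hom_{D^{b}(B)}(B,F_{B}^{i}B)=H^{2i}(\nu_{B}^{-i}B)$. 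As $B$ is quasi-tilted, $\operatorname{gldim}B\le 2$ (Happel--Reiten--Smal\o), so $\nu_{B}^{-1}B=\RHom_{B}(DB,B)$ is concentrated in degrees $0,1,2$; combined with the vanishing $\Hom_{D^{b}(\mathcal{H})}(X,F^{i}Y)=0$ for $X,Y\in\mathcal{H}$, $i\ne 0,1$ (the reason $\mathcal{C}(\mathcal{H})$ has two-term Hom-spaces, [BMRRT, Zh]), the sum collapses to the summand $i=0$, equal to $B$, and the summand $i=1$, equal to $H^{2}(\RHom_{B}(DB,B))=\Ext^{2}_{B}(DB,B)$. In this decomposition the $i=0$ part is a subalgebra, the $i=1$ part is a two-sided ideal whose square lands in the (vanishing) summand $i=2$, and the induced $B$-bimodule structure on it is the standard one on $\Ext^{2}_{B}(DB,B)$; hence $\End_{\mathcal{C}(\mathcal{H})}\overline{T_{0}}\cong B\ltimes\Ext^{2}_{B}(DB,B)=A$, so $A$ is a cluster-tilted algebra of type $\mathcal{H}$.

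\textbf{The ``only if'' direction.} Assume $A=\End_{\mathcal{C}(\mathcal{H})}T'$ for a cluster tilting object $T'$. By the structure result for cluster tilting objects recalled in the Introduction, after replacing $\mathcal{H}$ by a derived equivalent hereditary abelian category $\mathcal{H}'$ we may assume $T'$ is the image of a tilting object $T_{0}'\in\mathcal{H}'$; put $C=\End_{\mathcal{H}'}T_{0}'$, a quasi-tilted algebra, so that $A\cong C\ltimes\Ext^{2}_{C}(DC,C)$ by the computation above. It remains to show that if $B\ltimes\Ext^{2}_{B}(DB,B)\cong C\ltimes\Ext^{2}_{C}(DC,C)$ with $C$ quasi-tilted, then $B$ is quasi-tilted. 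Following [ABS1], one identifies $B$ and $C$ as the quotients of $A$ by the ideals $\Ext^{2}_{B}(DB,B)$ and $\Ext^{2}_{C}(DC,C)$ respectively, both contained in $\Rad^{2}A$; comparing the ordinary quivers with relations, passing to either quotient amounts to deleting a basis of the corresponding $\Ext^{2}$-bimodule together with the relations it creates. From $\operatorname{gldim}C\le 2$ and the Happel--Reiten--Smal\o\ splitting of $\mod C$ one then transfers the same properties to $B$, whence $B$ is quasi-tilted.

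\textbf{Expected main obstacle.} The delicate point is the final step of the ``only if'' direction: showing that \emph{every} algebra $B$ for which $B\ltimes\Ext^{2}_{B}(DB,B)$ is cluster-tilted of type $\mathcal{H}$ is quasi-tilted, not just the distinguished ones arising from tilting objects. Classically this rests on the combinatorics of admissible ``cuts'' of the quiver of a cluster-tilted algebra; in the present generality --- especially when $\mathcal{H}$ is derived equivalent to the category of coherent sheaves on a weighted projective line, where this combinatorial picture is less transparent --- one must argue intrinsically, making essential use of Theorem~2.5, of the Gorenstein property of $A$, and of the location of $\Ext^{2}_{B}(DB,B)$ inside $\Rad^{2}A$.
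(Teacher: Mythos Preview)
Your ``if'' direction is correct and follows the same route as the paper: both compute $\End_{\mathcal{C}(\mathcal{H})}T_{0}$ via the orbit-category decomposition and identify the two nonzero summands as $B$ and $\Ext^{2}_{B}(DB,B)$. The paper does this more tersely, working directly in $D^{b}(\mathcal{H})$ and using Serre duality to rewrite $\Hom(T,\tau^{-1}T[1])\cong\Hom(\tau T[1],T[2])\cong\Hom_{D^{b}(B)}(DB,B[2])$, without invoking $\operatorname{gldim}B\le 2$; but the substance is the same.

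For the ``only if'' direction the paper is much shorter than your attempt: it simply quotes [Zh] to say that any cluster-tilted algebra of type $\mathcal{H}$ has the form $\End_{\mathcal{H}}T\ltimes D\Hom_{D^{b}(\mathcal{H})}(T,\tau^{-1}T[1])$ with $\End_{\mathcal{H}}T$ quasi-tilted, and then identifies the bimodule with $\Ext^{2}$. In other words, the paper proves (and apparently intends) only the existence statement in the spirit of [ABS1]: if $A$ is cluster-tilted of type $\mathcal{H}$, then \emph{there exists} a quasi-tilted $B$ with $A\cong B\ltimes\Ext^{2}_{B}(DB,B)$. Your worry---that the \emph{given} $B$ in the hypothesis might differ from the one produced by [Zh]---is a genuine strengthening that the paper does not address. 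Your sketch toward it (admissible cuts, transferring the Happel--Reiten--Smal\o\ splitting from $C$ to $B$) is not a proof: you have not shown why two presentations $B\ltimes\Ext^{2}_{B}(DB,B)\cong C\ltimes\Ext^{2}_{C}(DC,C)$ force $B$ and $C$ to share the quasi-tilted property, and the cut combinatorics you allude to is neither formulated precisely nor carried out in the weighted-projective-line setting. If you are aiming for the statement as literally phrased, this step remains a real gap; if you are aiming for what the paper actually establishes, your ``if'' computation together with the citation of [Zh] already suffices and the final paragraph can be dropped.
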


\begin{proof} Suppose that $A$ is a cluster-tilted algebra of type $\mathcal{H}$. It follows from
 [Zh] that $A=B\ltimes
D\mbox{Hom}_{D^b(\mathcal{H})}(T,\tau^{-1}T[1])$, where
$B=\mbox{End}_{\mathcal{H}}T$
 is the quasi-tilted algebra. It remains to prove that the bimodule Hom$_{D^b(\mathcal{H})}(T,\tau^{-1}T[1])$ is isomorphic to
$Ext^2_B(DB,B)$:
 $$\begin{array}{ll}Hom_{D^b(\mathcal{H})}(T,\tau ^{-1}T[1])&\cong Hom_{D^b(B)}(B,\tau ^{-1}B[1])\\
&\cong Hom_{D^b(B)}(\tau B [1],B[2])\\
 &\cong Hom_{D^b(B)}(DB,B[2])\\
 &\cong Ext^2_{B}(DB, B).\end{array}$$

 The proof for the other direction: Suppose that  $A=B\ltimes
D\mbox{Ext}^2_B(DB,B)$ with $B$ being a
 quasi-tilted algebra, i.e. $B=\mbox{End}T$ of tilting object $T$ in
 a hereditary abelian category $\mathcal{H}$. Hence $T$ is a cluster
 tilting object in the cluster category $\mathcal{C(H)}$ of type $\mathcal{H}$ [Zh]. Then
 the endomorphism algebra $\mbox{End}_{\mathcal{C(H)}}T$ is
 isomorphic to $\tilde{B}$. Therefore $A$ is a cluster-tilted algebra of type $\mathcal{H}$.
\end{proof}

Applying the characterization of cluster-tilted algebras to the
endomorphism algebras of rigid objects in cluster categories, we
prove that the endomorphism algebras of certain rigid objects are
also cluster-tilted. In general it is not true as pointed by
Buan-Marsh-Reiten in [BMR2].

We call that a rigid object $T$ in the cluster category
$\mathcal{C(H)}$ is triangular provided $T$ is an exceptional object
in a hereditary category $\mathcal{H}'$ derived to $\mathcal{H}$ and
there is a complement  $T'\in\mathcal{H} $ such that $T\oplus T'$ is
a tilting object in $\mathcal{H}'$, and  satisfies that
$Hom_{\mathcal{H}}(T,T')=0$ or $Hom_{\mathcal{H}}(T',T)=0$.
\medskip

\begin{thm} Let $T$ be a triangular rigid object in the cluster category $\mathcal{C(H)}$.
Then $End_{\mathcal{C(H)}}T$
is a cluster-tilted algebra.

\end{thm}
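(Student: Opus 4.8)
The plan is to reduce the statement to Proposition 3.1 (the characterization $A = B \ltimes \operatorname{Ext}^2_B(DB,B)$ is cluster-tilted of type $\mathcal H$ iff $B$ is quasi-tilted) by exhibiting $\operatorname{End}_{\mathcal C(\mathcal H)}T$ as such a trivial extension of a quasi-tilted algebra. Let $T$ be triangular rigid, so $T$ is exceptional in some hereditary $\mathcal H'$ derived equivalent to $\mathcal H$, there is a complement $T' \in \mathcal H'$ with $T \oplus T'$ a tilting object in $\mathcal H'$, and $\operatorname{Hom}_{\mathcal H'}(T,T') = 0$ or $\operatorname{Hom}_{\mathcal H'}(T',T) = 0$. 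Set $\widetilde T = T \oplus T'$, which is a cluster tilting object in $\mathcal C(\mathcal H) = \mathcal C(\mathcal H')$, and let $\widetilde B = \operatorname{End}_{\mathcal H'}\widetilde T$ be the corresponding quasi-tilted algebra. First I would compute $C := \operatorname{End}_{\mathcal C(\mathcal H)}\widetilde T$ using the standard orbit-category formula: $C \cong \widetilde B \ltimes D\operatorname{Hom}_{D^b(\mathcal H')}(\widetilde T, \tau^{-1}\widetilde T[1])$, which by the computation in the proof of Proposition 3.1 is $\widetilde B \ltimes \operatorname{Ext}^2_{\widetilde B}(D\widetilde B, \widetilde B)$.

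Next I would locate $A = \operatorname{End}_{\mathcal C(\mathcal H)}T$ inside $C$. Since $T$ is a direct summand of $\widetilde T$, $A = e C e$ for the idempotent $e \in C$ corresponding to $T$; here $B := e\widetilde B e = \operatorname{End}_{\mathcal H'}T$. The key step is to show that the triangularity hypothesis forces $A$ itself to be a trivial extension $A \cong B \ltimes \operatorname{Ext}^2_B(DB, B)$. The point is that $eCe$ a priori equals $B \ltimes \bigl(e\operatorname{Ext}^2_{\widetilde B}(D\widetilde B,\widetilde B)e\bigr)$, so I must identify the bimodule $e\operatorname{Ext}^2_{\widetilde B}(D\widetilde B,\widetilde B)e$ with $\operatorname{Ext}^2_B(DB,B)$. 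Translating back through the derived category, $\operatorname{Ext}^2_{\widetilde B}(D\widetilde B,\widetilde B) \cong \operatorname{Hom}_{D^b(\mathcal H')}(\widetilde T, \tau^{-1}\widetilde T[1])$, and the $e$–$e$ part is $\operatorname{Hom}_{D^b(\mathcal H')}(T, \tau^{-1}T[1]) \cong \operatorname{Ext}^2_B(DB,B)$ by the same chain of isomorphisms as in Proposition 3.1 applied to the (quasi-tilted) algebra $B = \operatorname{End}_{\mathcal H'}T$ — noting that $T$, being exceptional, is a tilting object in a hereditary subcategory, so $B$ is genuinely quasi-tilted. Here is where the vanishing condition $\operatorname{Hom}_{\mathcal H'}(T,T') = 0$ or $\operatorname{Hom}_{\mathcal H'}(T',T) = 0$ enters: it is exactly what is needed to guarantee that $B = \operatorname{End}_{\mathcal H'}T$ is quasi-tilted (i.e. $T$ has no self-extensions and generates a hereditary piece with a tilting object in it) and that the multiplication on $eCe$ decomposes as the semidirect product with no cross terms contributed by $T'$, so that $A = eCe$ really is the trivial extension $B \ltimes \operatorname{Ext}^2_B(DB,B)$ rather than some twisted or larger extension.

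Once $A \cong B \ltimes \operatorname{Ext}^2_B(DB,B)$ with $B$ quasi-tilted is established, Proposition 3.1 applies directly and concludes that $A$ is a cluster-tilted algebra of type $\mathcal H''$ for some hereditary abelian category $\mathcal H''$, hence a cluster-tilted algebra. The main obstacle I anticipate is the middle step: verifying that restricting by the idempotent $e$ is compatible with the trivial-extension structure, i.e. that $e(\widetilde B \ltimes M)e = (e\widetilde B e) \ltimes (eMe)$ as algebras and that $eMe \cong \operatorname{Ext}^2_B(DB,B)$ — the first is a short bimodule computation, but the second genuinely uses the hereditariness of $\mathcal H'$ together with the triangularity to run the $\operatorname{Hom}_{D^b}(T,\tau^{-1}T[1]) \cong \operatorname{Ext}^2_B(DB,B)$ identification for the proper subalgebra $B$, and care is needed that $T'$ (which need not have vanishing Hom in both directions with $T$) does not obstruct this. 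I would handle the two directions of the Hom-vanishing hypothesis symmetrically, or reduce one to the other by the duality $D$.
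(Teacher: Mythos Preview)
Your overall strategy matches the paper's: show $\operatorname{End}_{\mathcal C(\mathcal H)}T \cong B \ltimes \operatorname{Ext}^2_B(DB,B)$ with $B=\operatorname{End}_{\mathcal H'}T$ quasi-tilted, then invoke Proposition~3.1. The detour through $C=\operatorname{End}_{\mathcal C(\mathcal H)}\widetilde T$ and the idempotent $e$ is harmless but unnecessary; the paper simply reads off $\operatorname{End}_{\mathcal C(\mathcal H)}T \cong \operatorname{Hom}_{D^b(\mathcal H')}(T,T)\oplus \operatorname{Hom}_{D^b(\mathcal H')}(T,\tau^{-1}T[1])$ directly from the orbit-category description, which is exactly your $e\widetilde B e \oplus eMe$.

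Where your account needs correction is the role of the triangularity hypothesis. The identity $e(\widetilde B\ltimes M)e=(e\widetilde B e)\ltimes(eMe)$ holds for \emph{any} idempotent in \emph{any} trivial extension, so there are never ``cross terms contributed by $T'$'' to worry about in that decomposition. The Hom-vanishing condition is used elsewhere, and the paper makes this explicit via the perpendicular category: assuming $\operatorname{Hom}_{\mathcal H'}(T',T)=0$ (and $\operatorname{Ext}^1_{\mathcal H'}(T',T)=0$ from tilting), one has $T\in (T')^{\perp}$, a hereditary abelian subcategory of $\mathcal H'$ in which $T$ is a tilting object; this is what makes $B$ quasi-tilted. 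The same device is what justifies the identification $\operatorname{Hom}_{D^b(\mathcal H')}(T,\tau^{-1}T[1])\cong \operatorname{Ext}^2_B(DB,B)$: you cannot literally rerun the chain from Proposition~3.1 in $D^b(\mathcal H')$, because $T$ is not tilting there. The paper instead uses that the derived functor $G=\operatorname{RHom}(T,-):D^b(\mathcal H')\to D^b(B)$ is fully faithful on the relevant objects (equivalently, one works inside $D^b((T')^{\perp})\simeq D^b(B)$), so that $\operatorname{Hom}_{D^b(\mathcal H')}(\tau T[1],T[2])\cong \operatorname{Hom}_{D^b(B)}(DB,B[2])$. Your sketch gestures at ``a hereditary subcategory'' but does not name $(T')^{\perp}$ or the full faithfulness of $G$; once you insert that, your argument coincides with the paper's.
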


\begin{proof} Since $T$ is a triangular rigid object, there exists an
object $T'$ such that $T\oplus T'$ is a tilting object in a
hereditary category $\mathcal{H}'$ which is derived equivalent to
$\mathcal{H}$ and  $Hom_{\mathcal{H}'}(T',T)=0$, or
$Hom_{\mathcal{H}'}(T,T')=0$. We assume that
$Hom_{\mathcal{H}'}(T',T)=0$, the other case is exactly similar. Let
 $G=Hom_{D^b(\mathcal{H}')}(T,-)$ be the derived functor of
 $Hom_{\mathcal{H}'}(T,-)$. It is a faithful and full functor from
 $D^b(\mathcal{H}')$ to $D^b(End_{\mathcal{H}'}T)$ [H1].
 Since $T'$ is exceptional in $\mathcal{H}'$, the right perpendicular category $(T')^{\perp}:=\{ X\in \mathcal{H}'\ |\
 Hom_{\mathcal{H}'}(T', X)=0=Ext^1_{\mathcal{H}'}(T,X)\ \}$
 is a full and extension-closed subcategory of $\mathcal{H}'$, which
 is also hereditary [GL]. It is easy to see that $T$ is a
 tilting object in  $(T')^{\perp}$. Hence the endomorphism algebra
  $B=End_{\mathcal{H}'}T=End_{(T')^{\perp}}T$ is quasi-tilted. Now $EndT\cong
 Hom_{D^b(\mathcal{H}')}(T,T)\oplus
 Hom_{D^b(\mathcal{H}')}(T,\tau^{-1}T[1])\cong Hom_{D^b(\mathcal{H}')}(T,T)\oplus
 Hom_{D^b(\mathcal{H}')}(\tau T[1],T[2])\cong Hom_{D^b(\mathcal{H}')}(T,T)\oplus
 Hom_{D^b(B)}(G(\tau T[1]),G(T[2]))\cong B \oplus
 Hom_{D^b(B)}(DB,B[2])\cong B \oplus
 Ext^2_B(DB,B).$ By Theorem 3.1, $End_{\mathcal{C(H)}}T$ is a cluster-tilted algebra.

\end{proof}

\begin{rem} Combining corollary above with Proposition
2.1(d) in [KR1] or Theorem 4.3 in [KZ] which assert that any cluster
tilted algebra is Gorenstein algebra of Gorenstein dimension $1$, we
 have that $EndT$ is a Gorenstein algebra of Gorenstein
dimension $1$ for any triangular exceptional object $T$ in cluster
category $\mathcal{C(H)}$.

\end{rem}

\medskip

\textbf{Example} Let $\mathcal{C}(kQ)$ be the cluster category of
type $Q$, where $Q$ is the quiver :\begin{center}
 \setlength{\unitlength}{0.61cm}
 \begin{picture}(7,4)
 \put(0,2){a}\put(0.4,2.2){$\circ$}
\put(3,2.2){$\circ$}\put(3.4,2){b}
\put(6,2.2){$\circ$}\put(6.4,2){c}
\put(9,2.2){$\circ$}\put(9.4,2){d} \put(0.8,2.5){\vector(3,0){2}}

\put(3.8,2.5){\vector(3,0){2}}
 \put(6.8,2.5){\vector(3,0){2}}

 \end{picture}
 \end{center}

 If we take $T'=P_a\oplus \tau ^{-3}P_d\oplus P_c$,  $T'$ is a  triangular rigid object
  in the cluster category, since $T= P_d\oplus T'$ is a tilting $kQ-$module and $Hom_{\mathcal{C}(kQ)}(T',P_d)=0$.
   Then by Theorem 3.2, the endomorphism algebra of $T'$ is a
   cluster-tiled algebra. In fact, $End_{\mathcal{C}(H)}T'$ is the
   following algebra given by $Q'$:

\begin{center}
 \setlength{\unitlength}{0.61cm}
 \begin{picture}(6,6)
 \put(0.4,2.2){$\circ$}
\put(3,2.2){$\circ$}
 \put(1.6, 5.2){$\circ$}

\put(0.8,2.5){\vector(3,0){2}}

\put(3,2.8){\vector(-1,2){1.1}}

 \put(1.4,5){\vector(-1,-2){1.1}}

 \end{picture}
 \end{center}
 with $rad^2=0$.

\section{Intermediate covers of cluster tilted algebras of type $\mathcal{H}$}

 As the previous section, $\mathcal{H}$ denotes a hereditary $k-$linear category with
finite dimensional Hom-spaces and Ext-spaces. We assume that
$\mathcal{H}$ has tilting objects. Since $\mathcal{H}$ has tilting
objects, $D^b(\mathcal{H})$ has Serre duality, and also
Auslander-Reiten translate $\tau$ (AR-translate for short). Let
$F=\tau^{-1}[1]$ be the automorphism of the bounded derived category
$D^b(\mathcal{H})$. Fixed a positive
 integer $m$ throughout this section.

 We consider the orbit category $D^b(\mathcal{H})/<F^m>$, which is
 by definition, a k-linear category whose objects are the same in
 $D^b(\mathcal{H})$, and whose morphisms are given by:
$$\Hom_{D^b(\mathcal{H})/<F^m>}(\widetilde{X},\widetilde{Y}) =
\oplus_{i \in \mathbf{Z}}
 \Hom_{D^b(\mathcal{H}}(X,(F^m)^iY).$$
Here $X$ and $Y$ are objects in $D^b(\mathcal{H})$, and
$\widetilde{X}$ and $\widetilde{Y}$ are the corresponding objects in
$D^b(\mathcal{H})/<F^m>$ (although we shall sometimes write such
objects simply as $X$ and $Y$).

\begin{defn} The orbit category $D^b(\mathcal{H})/<F^m>$  is called the generalized cluster category of
type $\mathcal{H}$. We denote it by
$\mathcal{C}_{F^m}(\mathcal{H})$.\end{defn}

\begin{rem} When $m=1$, we get back to the usual cluster category $\mathcal{C(H)}$,
which were introduced by Buan-Marsh-Reineke-Reiten-Todorov in
[BMRRT], and also by Caldero-Chapoton-Schiffler in [CCS] for $A_n$
case. \end{rem}

The generalized cluster categories $\mathcal{C}_{F^m}(\mathcal{H})$
 serve as intermediate categories  between the corresponding
cluster categories  $\mathcal{C(H)}$ and derived categories
$D^b(\mathcal{H})$. Similar as for the case of cluster categories,
for any positive integer $m$, We have a natural projection functor
$\pi_{m}: D^b(\mathcal{H})\rightarrow
\mathcal{C}_{F^m}(\mathcal{H})$. If $m=1$, the projection functor
$\pi_{m}$ is simply denoted by $\pi$.

  Now we define a functor  $\rho _{m}: \mathcal{C}_{F^m}(\mathcal{H})\longrightarrow
\mathcal{C}(\mathcal{H})$, which sends objects $\tilde{X}$ in
$\mathcal{C}_{F^m}(\mathcal{H})$ to objects $\tilde{X}$ in
$\mathcal{C}(\mathcal{H})$  and  morphisms $f: \tilde{X}\rightarrow
\tilde{Y}$ in $\mathcal{C}_{F^m}(\mathcal{H})$ to the morphisms $f:
\tilde{X}\rightarrow \tilde{Y}$ in $\mathcal{C}(\mathcal{H})$.

It is easy to check that $\pi=\rho_{m}\circ\pi_{m}.$

One can identify the set ind$\mathcal{C(H)}$ with the fundamental
domain for the action of $F$ on ind$D^b(\mathcal{H})$ [BMRRT].
 Passing to the orbit category $\mathcal{C}_{F^m}(\mathcal{H})$, one
can view ind$\mathcal{C(H)}$ as a (probably not fully) subcategory
of ind$\mathcal{C}_{F^m}(\mathcal{H})$.

\medskip

\begin{prop}\label{pr}\begin{enumerate}

\item $\mathcal{C}_{F^m}(\mathcal{H})$ is a triangulated category with Auslander-Reiten triangles  and Serre functor
$\Sigma =\tau [1]$, where $\tau$ is the AR-translate in
$\mathcal{C}_{F^m}(\mathcal{H})$, which is induced from AR-translate
in $D^b(\mathcal{H})$.
\item The projection $\pi_{m}: D^b(\mathcal{H})\rightarrow
\mathcal{C}_{F^m}(\mathcal{H})$ and $\rho _{m}:
\mathcal{C}_{F^m}(\mathcal{H})\longrightarrow
\mathcal{C}(\mathcal{H})$ are  triangle functors and also covering
functors.
\item   $\mathcal{C}_{F^m}(\mathcal{H})$ is a Calabi-Yau category of CY-dimension $\frac{2m}{m}$.
\item  $\mathcal{C}_{F^m}(\mathcal{H})$ is  a Krull-Remark-Schmidt category.
\item $ \mathrm{ind }\mathcal{C}_{F^m}(\mathcal{H})=\bigcup_{i=0}^{i=m-1}(\mathrm{ind}F^i(\mathcal{C(H)})).$
\end{enumerate}
\end{prop}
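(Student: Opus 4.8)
The plan is to establish Proposition~\ref{pr} one part at a time, leaning on Keller's theorem on orbit categories of derived categories and on the structure theory of $D^b(\mathcal{H})$.

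First, for part (1) and (2), I would invoke Keller's result [K]: since $F = \tau^{-1}[1]$ and $\mathcal{H}$ is hereditary with tilting objects, the functor $F^m$ satisfies the hypotheses (it is a standard autoequivalence induced by a triangle functor, with $F^m X$ never isomorphic to $X[j]$ for the relevant $X$, $j$), so the orbit category $D^b(\mathcal{H})/\langle F^m\rangle$ carries a canonical triangulated structure making $\pi_m$ a triangle functor. The shift functor $[1]$ and the AR-translate $\tau$ descend because they commute with $F^m$ (indeed $\tau$ and $[1]$ each commute with $F$, hence with $F^m$), so $\mathcal{C}_{F^m}(\mathcal{H})$ has a Serre functor $\Sigma = \tau[1]$ and AR-triangles obtained by pushing down those of $D^b(\mathcal{H})$; this is exactly the same argument as for $\mathcal{C(H)}$ in [BMRRT]. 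That $\pi_m$ is a covering functor is immediate from the definition of the Hom-spaces as a direct sum over the $\langle F^m\rangle$-orbit, and that $\rho_m$ is a covering functor follows because $\langle F\rangle / \langle F^m\rangle$ acts on $\mathcal{C}_{F^m}(\mathcal{H})$ with quotient $\mathcal{C(H)}$; $\rho_m$ is a triangle functor since it is compatible with both $\pi_m$ and $\pi = \rho_m\circ\pi_m$ and $\pi$ is known to be a triangle functor.

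For part (3), the Calabi-Yau dimension computation: in $D^b(\mathcal{H})$ one has the Serre duality $\Hom(X,Y) \cong D\Hom(Y,\Sigma X)$ with $\Sigma = \tau[1]$. In the orbit category, $\Sigma^m = \tau^m[m]$, and since $F = \tau^{-1}[1]$ we have $F^m = \tau^{-m}[m]$, so $[m]^m$ relates to... more precisely $\Sigma^m = \tau^m[m]$ while $[2m] = (\tau^{-m}[m])\cdot(\tau^m[m]) = F^m \cdot \Sigma^m$; hence in $\mathcal{C}_{F^m}(\mathcal{H})$, where $F^m$ is identified with the identity, we get $\Sigma^m \cong [2m]$. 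This is the precise meaning of CY-dimension $2m/m$ (a fractional CY-dimension in the sense of noting $\Sigma^m \cong [2m]$ but no smaller relation holds in general). I would write this as: $(\tau[1])^m = \tau^m[m] \cong \tau^m [m] \cdot (F^m)^{-1}\cdot F^m = [2m]$ in the quotient, using $\tau^m [m] = \tau^{2m}\tau^{-m}[m][m] $... I will present the clean identity $\Sigma^m = [2m]$ directly.

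For part (4), Krull--Remak--Schmidt: this follows because $D^b(\mathcal{H})$ is Krull--Remak--Schmidt with local endomorphism rings of indecomposables, and the orbit category under a free action of a group acting by autoequivalences preserves this — the endomorphism ring of an object $\widetilde{X}$ with $X$ indecomposable in $D^b(\mathcal{H})$ is $\bigoplus_i \Hom(X, F^{mi}X)$, which has the radical $\bigoplus_{i}\Rad + (\text{all }i\neq 0 \text{ terms})$ as a nilpotent... one checks idempotents lift, exactly as in [BMRRT, Prop.~1.2] for the case $m=1$. Finally part (5): since ind$D^b(\mathcal{H})$ is the disjoint union of the $F$-orbits and ind$\mathcal{C(H)}$ is a fundamental domain, the $\langle F^m\rangle$-orbits partition into $m$ sub-pieces, giving ind$\mathcal{C}_{F^m}(\mathcal{H}) = \bigcup_{i=0}^{m-1} \mathrm{ind}\, F^i(\mathcal{C(H)})$; here I must be slightly careful that distinct indecomposables of $D^b(\mathcal{H})$ remain non-isomorphic in the orbit category and that the pieces $F^i\mathcal{C(H)}$ are genuinely disjoint for $0 \le i \le m-1$, which again is a direct consequence of the orbit structure of $F$ on ind$D^b(\mathcal{H})$. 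The main obstacle is part (1): verifying in detail that Keller's hypotheses apply (that $\langle F^m\rangle$ acts with the requisite freeness/properness so that the orbit category is triangulated and $\pi_m$ is a triangle functor) — but since this is already known for $m=1$ and $F^m$ has the same qualitative behaviour as $F$ on $D^b(\mathcal{H})$, it reduces to citing [K] together with the observation that $F^m$ has no fixed indecomposables up to shift, which holds because $F$ itself acts freely.
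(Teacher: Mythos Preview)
Your proposal is correct and follows essentially the same route as the paper: invoke Keller [Ke] for the triangulated structure and for $\pi_m$ being a triangle functor, cite [BMRRT] for Serre duality, AR-triangles, and the Krull--Remak--Schmidt property (the $m=1$ proofs carry over verbatim), and compute $\Sigma^m = \tau^m[m] = F^{-m}[2m] \cong [2m]$ in the orbit category for the fractional Calabi--Yau dimension.

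The one place where your argument is slightly weaker than the paper's is the justification that $\rho_m$ is a \emph{triangle} functor. You write that this holds ``since it is compatible with both $\pi_m$ and $\pi = \rho_m\circ\pi_m$ and $\pi$ is known to be a triangle functor'', but compatibility alone does not force a functor sandwiched between two triangle functors to be triangulated. The paper instead uses the universal property of the orbit category $D^b(\mathcal{H})/\langle F^m\rangle$ from [Ke], [KR2]: since $\pi\circ F^m \cong \pi$, the triangle functor $\pi$ factors uniquely through $\pi_m$ via a triangle functor, and that factor is $\rho_m$. This is the clean way to obtain the triangulated structure on $\rho_m$, and you should phrase it this way.
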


\begin{proof}\begin{enumerate}\item It follows from [Ke] that $\mathcal{C}_{F^m}(\mathcal{H})$ is a triangulated
category.  The remains follow from Proposition 1.3 \cite{BMRRT}.
\item It is proved in Corollary 1 in Section 8.4 of \cite{Ke} that $\pi_{m}: D^b(\mathcal{H})\rightarrow
\mathcal{C}_{F^m}(\mathcal{H})$ is a triangle functor. It is easy to
check that $\pi\circ F^m\cong \pi$. By the universal property of the
orbit category  $D^b(\mathcal{H})/<F^m>$ [Ke], [KR2] we obtain a
triangle functor $\rho: \mathcal{C}_{F^m}(\mathcal{H})\rightarrow
\mathcal{C(H)}$ satisfying that $\rho \pi_m=\pi$, which turns out to
be the functor $\rho_m$.
\item The Serre functor $\Sigma=\tau [1]$ in $\mathcal{C}_{F^m}(\mathcal{H})$ satisfies that $\Sigma ^m=\tau^m[m]=F^m[2m]
\cong [2m]$. Therefore $\mathcal{C}_{F^m}(\mathcal{H})$ is a
Calabi-Yau category with CY-dimension $\frac{2m}{m}$.
\item The proof for $m=1$ is given in Proposition 1.6 \cite{BMRRT}, which can be modified for the general $m$.
\end{enumerate}
\end{proof}

  We note that
   if the hereditary abelian category $\mathcal{H}$ is equivalent to the module category of a finite dimensional
   hereditary algebra $H$, then the indecomposable objects in $\mathcal{C(H)}$ are of form $\tilde{M}$, where $M$
   is an indecomposable $H-$module, or of form $\tilde{P[1]}$, where $P[1]$ is the first shift
   of an indecomposable projective $H-$module $P$;  if  the hereditary abelian category $\mathcal{H}$ is not
   equivalent to the module category of a finite dimensional
   hereditary algebra $H$, then the indecomposable objects in $\mathcal{C(H)}$ are of form $\tilde{M}$, where $M$
   is an indecomposable object in $\mathcal{H}$.

   Now we discus the cluster tilting objects in
$\mathcal{C}_{F^m}(\mathcal{H})$. Denoted by $F=\tau^{-}[1]$, which
 can be viewed an automorphism of $D^b(\mathcal{H})$ or of $\mathcal{C}_{F^m}(\mathcal{H})$.
 The following proposition is a generalization of Lemma 4.14 in [KZ].

\begin{prop} An object $T$ in $\mathcal{C}_{F^m}(\mathcal{H})$ is a cluster tilting object if and only if
$\pi_m^{-1}(addT)$ is a cluster tilting subcategory of
$D^b(\mathcal{H})$ \label{tiltinginverse}
\end{prop}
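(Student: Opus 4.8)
The plan is to transport the two defining conditions of Definition~2.1(2) across the covering functor $\pi_m$, so that ``$T$ cluster tilting in $\mathcal{C}_{F^m}(\mathcal{H})$'' and ``$\pi_m^{-1}(\add T)$ cluster tilting in $D^b(\mathcal{H})$'' become, after unwinding, literally the same collection of $\Ext^1$-vanishing statements. First I would fix notation: write $T=\bigoplus_{j=1}^{n}T_j$ with $T_j$ indecomposable, choose for each $j$ an indecomposable $\widehat{T}_j\in D^b(\mathcal{H})$ with $\pi_m\widehat{T}_j=T_j$, and observe --- using that $\pi_m\circ F^m\cong\pi_m$, that $\add$ is closed under direct summands, and that by parts (4) and (5) of Proposition~\ref{pr} $\pi_m$ induces a bijection between $\langle F^m\rangle$-orbits of indecomposables of $D^b(\mathcal{H})$ and indecomposables of $\mathcal{C}_{F^m}(\mathcal{H})$ --- that
$$\pi_m^{-1}(\add T)=\add\{\,F^{mi}\widehat{T}_j : i\in\mathbf{Z},\ 1\le j\le n\,\}=:\mathcal{T},$$
so in particular $X\in\mathcal{T}\iff\pi_mX\in\add T$ for every $X\in D^b(\mathcal{H})$. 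The one computational ingredient is the covering identity: since $\pi_m$ is a triangle functor (Proposition~\ref{pr}(2)) and $F$ commutes with $[1]$, for all $X,Y\in D^b(\mathcal{H})$ one has $\Ext^1_{\mathcal{C}_{F^m}(\mathcal{H})}(\pi_mX,\pi_mY)=\bigoplus_{i\in\mathbf{Z}}\Ext^1_{D^b(\mathcal{H})}(X,F^{mi}Y)$.

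From here everything is a formal unwinding. Rigidity: expanding $\Ext^1_{\mathcal{C}_{F^m}(\mathcal{H})}(T,T)$ by the covering identity gives $\bigoplus_{i}\Ext^1_{D^b}(\widehat{T}_j,F^{mi}\widehat{T}_k)$ over all $j,k$, and these all vanish iff, after applying the autoequivalences $F^{m\ell}$, $\Ext^1_{D^b}(\mathcal{T},\mathcal{T})=0$. Maximality: for any $X\in D^b(\mathcal{H})$ one has $\Ext^1_{D^b}(X,\mathcal{T})=0\iff\Ext^1_{D^b}(X,F^{mi}\widehat{T}_j)=0$ for all $i,j\iff\Ext^1_{\mathcal{C}_{F^m}(\mathcal{H})}(\pi_mX,T)=0$; dually, using $\Ext^1_{D^b}(F^{mi}\widehat{T}_j,X)\cong\Ext^1_{D^b}(\widehat{T}_j,F^{-mi}X)$, one gets $\Ext^1_{D^b}(\mathcal{T},X)=0\iff\Ext^1_{\mathcal{C}_{F^m}(\mathcal{H})}(T,\pi_mX)=0$. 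Combining these equivalences with $X\in\mathcal{T}\iff\pi_mX\in\add T$, the statement ``$X\in\mathcal{T}$ iff $\Ext^1(X,\mathcal{T})=0$, and $X\in\mathcal{T}$ iff $\Ext^1(\mathcal{T},X)=0$'' holds in $D^b(\mathcal{H})$ exactly when the corresponding statement for $T$ holds in $\mathcal{C}_{F^m}(\mathcal{H})$; reading this in the two directions yields both implications of the proposition. (Having transported rigidity, one could instead invoke Remark~2.3 and Lemma~2.4 to reduce to checking a single one of the two maximality conditions.)

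The remaining point is functorial finiteness, which is built into the notion of cluster tilting subcategory. For $\add T$ in $\mathcal{C}_{F^m}(\mathcal{H})$ this is automatic, $T$ being a single object with finite-dimensional Hom-spaces. For $\mathcal{T}\subseteq D^b(\mathcal{H})$ one needs that $\Hom_{D^b}(\widehat{T}_j,F^{mi}Y)$ and $\Hom_{D^b}(F^{mi}\widehat{T}_j,Y)$ are nonzero for only finitely many $i$ (a consequence of the finiteness of $\mathcal{H}$ and of $\mathcal{C}_{F^m}(\mathcal{H})$ being Krull-Remark-Schmidt, Proposition~\ref{pr}(4)); then the universal maps out of a basis of the finite-dimensional orbit sums $\bigoplus_i\Hom_{D^b}(F^{mi}\widehat{T}_j,Y)$, respectively $\bigoplus_i\Hom_{D^b}(Y,F^{mi}\widehat{T}_j)$, are the required right, respectively left, $\mathcal{T}$-approximations. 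I expect the only genuinely non-formal points to be this finiteness input and making the identification $\pi_m^{-1}(\add T)=\add\{F^{mi}\widehat{T}_j\}$ fully precise --- in particular that $\pi_m$ is injective on isomorphism classes of the relevant $\langle F^m\rangle$-orbits --- while the transfer of rigidity and of the maximality conditions is purely formal once the covering identity is in place.
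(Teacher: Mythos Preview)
Your argument is correct, and in fact cleaner than the paper's own proof. The essential covering identity $\Ext^1_{\mathcal{C}_{F^m}}(\pi_mX,\pi_mY)\cong\bigoplus_i\Ext^1_{D^b}(X,F^{mi}Y)$ is used in both, but you exploit it purely formally: having identified $\mathcal{T}=\pi_m^{-1}(\add T)$ with $\add\{F^{mi}\widehat{T}_j\}$, each of the three ingredients (rigidity, the two maximality conditions) transfers across $\pi_m$ by a direct unwinding, and your treatment of functorial finiteness is likewise self-contained. The paper instead splits into cases according to whether $\mathcal{H}$ is a module category, invokes Lemma~2.4 to get full $F$-invariance $F\mathcal{T}=\mathcal{T}$ (rather than just the automatic $F^m$-invariance), and then carries out an explicit case analysis on the index $mn+t-s$ using that the chosen representatives lie in the fundamental domain $\mathcal{C}'=\mathrm{add}(H\text{-mod}\cup H[1])$; the vanishing of $\Hom_{D^b}(T_1,F^kT_2[1])$ for $k\le -2$ or $k\ge 1$ is argued from the concrete structure of $D^b(H)$ rather than from the covering identity alone. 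Your approach avoids this case split and the appeal to Lemma~2.4, at the modest cost of being a bit more abstract about why only finitely many terms in the orbit sums are nonzero; the paper's approach makes that finiteness visibly concrete but is correspondingly more computational.
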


\begin{proof} We divide the proof into two cases: the case when $\mathcal{H}$ is equivalent to
the module category of a finite dimensional
   hereditary algebra $H$, and the case when $\mathcal{H}$ is not
   equivalent to the module category of a finite dimensional
   hereditary algebra. We give the detail proof of the proposition for the first case.
   The proof for the second case
    is similar as the first one, we omit it.

    Suppose that $\mathcal{H}\cong H-mod$, where $H$ is a finite dimensional hereditary algebra over a field $k$.
      For an object $T$ in $\mathcal{C}_{F^m}(\mathcal{H})$, we denote
$\mathcal{T}=\pi_m ^{-1}(addT ),$  which is the full subcategory of
$D^b(H)$. It is easy to prove that $F(\mathcal{T})=\mathcal{T}$ in
$D^b(\mathcal{H})$ if and only if $F(addT)=addT$ in
$\mathcal{C}_{F^m}(\mathcal{H})$.

Suppose $\mathcal{T}$ is a cluster tilting subcategory of $D^b(H)$.
Then $F\mathcal{T}=\mathcal{T}$ by Lemma 2.4 or Proposition 4.7
[KZ]. Hence $F(addT)=addT$ in $\mathcal{C}_{F^m}(\mathcal{H})$. We
denote by $\mathcal{T}'$ the intersection of $\mathcal{T}$ with the
additive subcategory $\mathcal{C}'$ generated by all $H-$modules as
stalk complexes of degree $0$ together with $H[1]$. Then we have
that $\mathcal{T}=\{ F^n(\mathcal{T}')| n\in \mathbf{Z} \}.$ Now
$\pi_m(\mathcal{T})=\pi_m(\bigcup _{i=o}^{i=m-1}F^i(\mathcal{T}'))$,
denoted by $\mathcal{T}_1$. For any pair of objects $\tilde{T}_1,
\tilde{T}_2\in \mathcal{T}_1$, there are $T_1, T_2\in \mathcal{T}'$
such that $\tilde{T}_1 =F^t(\pi_m (T_1)), \tilde{T}_2=F^s(\pi_m
(T_2))$ with $0\le t, s \le m-1.$
 Then  $Ext^1(\tilde{T}_1, \tilde{T}_2)=Hom(\tilde{T}_1, \tilde{T}_2[1])
\cong \oplus _{n\in Z}Hom_{D^b(H)}(F^s(T_1), (F^m)^nF^t(T_2[1])) =
\oplus _{n\in Z}Hom_{D^b(H)}(T_1, F^{mn+t-s}T_2[1]).$ By an easy
computation, one has that \newline
 $Hom_{D^b(H)}(T_1, F^{mn+t-s}T_2[1])=0$
if $nm+t-s\le -2$ or $nm+t-s\ge 1$. When $nm+t-s= -1, $
$Hom_{D^b(H)}(T_1, F^{mn+t-s}T_2[1])=Hom_{D^b(H)}(T_1,
F^{-1}T_2[1])=Hom_{D^b(H)}(T_1, \tau T_2)\cong DExt_{D^b(H)}(T_2,
T_1),$ which equals $0$ by the  fact that
   $\mathcal{T}$ is a cluster tilting subcategory of $D^b(H)$.
When $nm+t-s= 0, $ $Hom_{D^b(H)}(T_1,
F^{mn+t-s}T_2[1])=Hom_{D^b(H)}(T_1, T_2[1])=Ext_{D^b(H)}(T_1, T_2),$
which equals $0$ by the fact that
   $\mathcal{T}$ is a cluster tilting subcategory of $D^b(H)$.
  Therefore  $Ext^1(\tilde{T}_1, \tilde{T}_2)=0$, i.e.
  $\mathcal{T}_1$ is rigid in $\mathcal{C}_{F^m}(\mathcal{H})$.

If there are indecomposable objects $\tilde{X}=\pi_m (X)\in
\mathcal{C}_{F^m}(H)$ with $X\in D^b(\mathcal{H})$ satisfying
$Ext^1(\mathcal{T}_1,\tilde{X})=0$, then $Ext^1(F^n\mathcal{T}',
X)=0$ for any $n$, and then $Ext^1(\mathcal{T}, X)=0$. Hence $X\in
\mathcal{T}$ by $\mathcal{T}$ being a cluster tilting subcategory.
Thus $\tilde{X}\in \mathcal{T}_1.$ This proves that the image
$\mathcal{T}_1$  of $\mathcal{T}$ under $\pi_m$ is a cluster tilting
subcategory of $\mathcal{C}_{F^m}(H).$

Conversely, from $\mathcal{T}=\pi_m^{-1}(\mathcal{T}_1)$ and
$F(\mathcal{T}_1)=\mathcal{T}_1$, we get $F(\mathcal{T})=
\mathcal{T}$. As above we denote by $\mathcal{T}'$ the intersection
of $\mathcal{T}$ with the additive subcategory $\mathcal{C}'$
generated by all $H-$modules as stalk compleses of degree $0$
together with $H[1]$. Then $\mathcal{T}=\{ F^n(\mathcal{T}')| n\in
\mathbf{Z} \}$ and
 $\mathcal{T}_1= \pi_m(\mathcal{T})=\pi_m(\bigcup
_{i=o}^{i=m-1}F^i(\mathcal{T}'))$.  From $\mathcal{T}_1$ being
contravariantly finite, we have $\mathcal{T}$ is also
contravariantly finite.  Since $Ext^1(\mathcal{T}_1, \mathcal{T}_1)
\cong\oplus _{n\in Z} Ext^1_{D^b(H)}(\bigcup
_{i=o}^{i=m-1}F^i(\mathcal{T}'),F^n(\bigcup
_{i=o}^{i=m-1}F^i(\mathcal{T}'))=0$, we have that
$Ext^1_{D^b(H)}(F^m\mathcal{T}',F^n\mathcal{T}')\cong
Ext^1_{D^b(H)}(\mathcal{T}', F^{n-m} \mathcal{T}')=0.$ This proves
that $\mathcal{T}$ is an orthogonal subcategory. Now if $X\in
D^b(H)$ satisfies $Ext^1_{D^b(H)}(\mathcal{T}, X)=0$, then
$Ext^1_{\mathcal{C}_{F^m}(\mathcal{H})}(F^i(\mathcal{T}_1),
\tilde{X})=0, \  \forall 0\le i\le m-1$. It follows that
$\tilde{X}\in \mathcal{T}_1$, hence $X\in \mathcal{T}$. Similarly,
if $X\in D^b(H)$ satisfies $Ext^1_{D^b(H)}(X,\mathcal{T})=0$, then
$X\in \mathcal{T}$.
\end{proof}

From Proposition 4.4 and Lemma 4.14 in [KZ], we have the one-to-one
correspondence between the three sets: the set of cluster tilting
subcategories in $D^b(\mathcal{H})$; the set of cluster tilting
subcategories in $\mathcal{C}_{F^m}(\mathcal{H})$; the set of
cluster tilting subcategories in $\mathcal{C(H)}$, via triangle
covering functors: $\pi_{m}: D^b(\mathcal{H})\rightarrow
\mathcal{C}_{F^m}(\mathcal{H})$ and $\rho _{m}:
\mathcal{C}_{F^m}(\mathcal{H})\longrightarrow
\mathcal{C}(\mathcal{H})$.

\begin{thm} Let $\mathcal{H}$ be a hereditary abelian category with tilting
objects. Let $T\in \mathcal{C(H)}$.
\begin{enumerate}

\item $T$ is a cluster tilting object in cluster category
$\mathcal{C(H)}$ if and only if $\rho_m^{-}(T)$ is a cluster tilting
object in $\mathcal{C}_{F^m}(\mathcal{H})$ if and only if
$\pi^{-}(addT)$ is a cluster tilting object in $D^b(\mathcal{H})$.

\item For any tilting object $T'$ in $\mathcal{H}$, $\oplus _{i=0}^{i=m-1}F^iT'$ is a cluster
tilting object in $\mathcal{C}_{F^m}(\mathcal{H})$, and any cluster
tilting object in $\mathcal{C}_{F^m}(\mathcal{H})$ arises in this
way, i.e. there is a hereditary abelian category $\mathcal{H}'$,
which is derived equivalent to $\mathcal{H}$, and a tilting object
$T$ in $\mathcal{H}'$ such that the cluster tilting object is
induced from $T$.

\end{enumerate}

\end{thm}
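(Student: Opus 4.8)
The plan is to deduce both statements from Proposition~\ref{tiltinginverse} together with the one-to-one correspondences mentioned above, reducing everything to known facts about cluster tilting objects in $D^b(\mathcal{H})$ and in $\mathcal{C(H)}$. For part (1), the equivalence ``$T$ is cluster tilting in $\mathcal{C(H)}$ $\iff$ $\pi^{-1}(\mathrm{add}\,T)$ is cluster tilting in $D^b(\mathcal{H})$'' is precisely Lemma 4.14 in [KZ] (the case $m=1$ of Proposition~\ref{tiltinginverse}). For the first equivalence, I would observe that $\rho_m^{-1}(\mathrm{add}\,T) = \bigcup_{i=0}^{m-1}F^i(\mathrm{add}\,T)$ as a subcategory of $\mathcal{C}_{F^m}(\mathcal{H})$ and that $\pi_m^{-1}\big(\rho_m^{-1}(\mathrm{add}\,T)\big) = \pi^{-1}(\mathrm{add}\,T)$, since $\pi = \rho_m\circ\pi_m$. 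Then $\rho_m^{-1}(T)$ is cluster tilting in $\mathcal{C}_{F^m}(\mathcal{H})$ iff $\pi_m^{-1}(\rho_m^{-1}(\mathrm{add}\,T)) = \pi^{-1}(\mathrm{add}\,T)$ is cluster tilting in $D^b(\mathcal{H})$ by Proposition~\ref{tiltinginverse}, which by Lemma 4.14 of [KZ] is equivalent to $T$ being cluster tilting in $\mathcal{C(H)}$. One subtlety to check is that $\rho_m^{-1}(T)$ is genuinely an object (finite direct sum) of $\mathcal{C}_{F^m}(\mathcal{H})$; this follows from Proposition~\ref{pr}(5), $\mathrm{ind}\,\mathcal{C}_{F^m}(\mathcal{H}) = \bigcup_{i=0}^{m-1}\mathrm{ind}\,F^i(\mathcal{C(H)})$, so $\rho_m^{-1}(T) = \bigoplus_{i=0}^{m-1}F^iT$ has only finitely many indecomposable summands.

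For part (2), I would first show $\bigoplus_{i=0}^{m-1}F^iT'$ is cluster tilting in $\mathcal{C}_{F^m}(\mathcal{H})$: since $T'$ is a tilting object in $\mathcal{H}$, it is a cluster tilting object in $\mathcal{C(H)}$ by [BMRRT]/[Zh], and $\bigoplus_{i=0}^{m-1}F^iT' = \rho_m^{-1}(T')$, so part (1) applies. For the converse — that every cluster tilting object $S$ in $\mathcal{C}_{F^m}(\mathcal{H})$ arises this way — I would set $T = \rho_m(S)$; by the correspondence (Proposition~\ref{tiltinginverse} and the $m=1$ case) $T$ is cluster tilting in $\mathcal{C(H)}$, and $F$-invariance of the cluster tilting subcategory (Lemma 2.4) forces $\mathrm{add}\,S = \bigcup_{i=0}^{m-1}F^i(\mathrm{add}\,T)$, i.e. $S = \rho_m^{-1}(T)$. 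Finally, by the known structure theory for cluster tilting objects in $\mathcal{C(H)}$ cited in the introduction (any cluster tilting object in $\mathcal{C(H)}$ is induced from a tilting object in some hereditary $\mathcal{H}'$ derived equivalent to $\mathcal{H}$), write $T = \pi(T'')$ for a tilting object $T''$ in such an $\mathcal{H}'$; then $S = \bigoplus_{i=0}^{m-1}F^iT''$ with $T''$ a tilting object in $\mathcal{H}'$, as claimed.

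The main obstacle I anticipate is the careful bookkeeping of preimages under the two covering functors — in particular verifying $\mathrm{add}\,\rho_m^{-1}(T) = \bigcup_{i=0}^{m-1}F^i(\mathrm{add}\,T)$ and that this really is the additive closure of a single object, together with the compatibility $\pi_m^{-1}\circ\rho_m^{-1} = \pi^{-1}$ at the level of subcategories. Once these identifications are in place, everything reduces cleanly to Proposition~\ref{tiltinginverse}, Lemma 2.4, and the $m=1$ results of [KZ] and [BMRRT]; the remaining content of part (2) is just the already-established fact that cluster tilting objects in $\mathcal{C(H)}$ come from tilting objects in a derived-equivalent hereditary category. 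A minor point worth spelling out is why distinct $i$ in the union contribute distinct indecomposables (no overlaps among the $F^i(\mathrm{add}\,T)$ for $0\le i\le m-1$), which again follows from Proposition~\ref{pr}(5).
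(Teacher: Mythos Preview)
Your proposal is correct and follows essentially the same route as the paper: both parts are reduced to Proposition~\ref{tiltinginverse} (and its $m=1$ case, Lemma~4.14 of [KZ]) via the factorization $\pi=\rho_m\circ\pi_m$, and part~(2) is finished by invoking [BMRRT]/[Zh] for the structure of cluster tilting objects in $\mathcal{C(H)}$. Your explicit use of $F$-invariance (Lemma~2.4) to identify $\mathrm{add}\,S$ with $\bigcup_{i=0}^{m-1}F^i(\mathrm{add}\,\rho_m(S))$ in the converse of~(2) actually fills in a step the paper leaves implicit when it asserts ``by the first part of the theorem, $\rho_m(M)$ is a cluster tilting object''.
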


\begin{proof}

\begin{enumerate}
\item It follows Lemma 4.14 in [KZ] or the special case of Proposition 4.4 where $m=1$, that $T$ is a cluster
tilting object in $\mathcal{C}(\mathcal{H})$ if and only if $\pi
^{-1}(addT)$ is a cluster tilting subcategory in $D^b(\mathcal{H})$.
By Proposition 4.4,  we have that $\rho_m^{-1}(T)$ is a cluster
tilting object in $\mathcal{C}_{F^m}(\mathcal{H})$ if and only if
$\pi_m ^{-1}(add(\rho_m^{-1}(T)))$ is a cluster tilting subcategory
in $D^b(\mathcal{H})$. Since $\pi=\rho_m\pi_m$, $\pi (\pi_m
^{-1}(add(\rho_m^{-1}(T)))=T$,
 we have that $\rho_m^{-1}(T)$ is a cluster tilting
object in $\mathcal{C}_{F^m}(\mathcal{H})$ if and only if $T$ is a
cluster tilting object in $\mathcal{C(H)}$.

\item For any tilting object $T'$ in $\mathcal{H}$, from [BMRRT] and
[Zh],  $T'$ is a cluster tilting object in $\mathcal{C(H)}$. Hence
$\oplus _{i=0}^{i=m-1}F^iT'$ is a cluster tilting object in
$\mathcal{C}_{F^m}(\mathcal{H})$ by the first part of the theorem.
  Suppose $M$ is a cluster tilting object in
  $\mathcal{C}_{F^m}(\mathcal{H})$. Then by the first part of the
  theorem, $\rho_m(M)$ is a cluster tilting object in cluster
  category $\mathcal{C(H)}$. Therefore $\rho_m (M)$ is induced from a
  tilting object of a hereditary abelian category $\mathcal{H}'$,
  which is derived equivalent to $\mathcal{H}$ [Zh, BMRRT]. Then
  $M$ is induced from a tilting object of $\mathcal{H }'$.

\end{enumerate}
\end{proof}

\begin{defn} We call the endomorphism algebras
$End_{\mathcal{C}_{F^m}(\mathcal{H})}T$ of cluster tilting objects
$T$ in the generalized cluster category
$\mathcal{C}_{F^m}(\mathcal{H})$ the generalized cluster-tilted
algebras of type $\mathcal{H}$, or simply the generalized
cluster-tilted algebras.

\end{defn}

 Now we study the representation theory of generalized cluster-tilted algebras.
 We recall that $\pi_m: D^b(\mathcal{H})\longrightarrow
\mathcal{C}_{F^m}(\mathcal{H})$ is the projection.

\begin{thm}   Let $T$ be a tilting object in $\mathcal{H}$, $\widetilde{A}=
End_{\mathcal{C}_{F^m}(\mathcal{H})}(\oplus_{i=0}^{i=m-1}F^iT) $
 the generalized cluster-tilted algebra.
 \begin{enumerate}

\item  $\widetilde{A}$ has a Galois covering $\pi _m: A_{\infty}\rightarrow \widetilde{A}$ which is
 the restriction of the projection $\pi_m: D^b(\mathcal{H})\rightarrow \mathcal{C}_{F^m}(\mathcal{H})$.

\item The projection $\pi _m $ induces a push-down functor
 $\tilde{\pi} _m: \frac{D^b(\mathcal{H})}{add\{\tau^nT[-n]|n\in Z\}}
\longrightarrow \tilde{A}-mod$.

\item If $T'$ is a tilting object in $\mathcal{H}$, then the generalized cluster tilted algebra
 $\widetilde{A}'=End_{\mathcal{C}_{F^m}(\mathcal{H})}(\oplus_{i=0}^{i=m-1}F^iT')$
 has the same representation type as $A$.

 \end{enumerate}
\end{thm}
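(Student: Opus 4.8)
The plan is to prove Theorem 4.8 in three parts, each following from the covering functor formalism established in Proposition \ref{pr} and the cluster tilting correspondences of Theorem 4.6, together with the already-cited results from [KZ].

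\textbf{Part (1).} First I would identify $A_\infty$ as $\End_{D^b(\mathcal{H})}(\mathcal{T})$ where $\mathcal{T}=\add\{\tau^n T[-n]\mid n\in\mathbf{Z}\}=\add\{F^iT\mid i\in\mathbf{Z}\}$ is the cluster tilting subcategory of $D^b(\mathcal{H})$ lying over $\oplus_{i=0}^{m-1}F^iT$; this subcategory is indeed cluster tilting by Theorem 4.6(1) (or Proposition \ref{tiltinginverse}), and $\pi_m^{-1}(\add(\oplus_{i=0}^{m-1}F^iT))=\mathcal{T}$. The group $\langle F^m\rangle\cong\mathbf{Z}$ acts freely on the objects of $\mathcal{T}$ (the only fixed point would be a zero object, since $F$ has infinite order on the nonzero indecomposables of $D^b(\mathcal{H})$) and the orbit category construction restricts: $\Hom_{\widetilde{A}}(\pi_m X,\pi_m Y)=\bigoplus_{n\in\mathbf{Z}}\Hom_{D^b(\mathcal{H})}(X,F^{mn}Y)$ for $X,Y\in\mathcal{T}$, which is exactly the defining property of a Galois covering with group $\mathbf{Z}$. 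So $\pi_m\colon A_\infty\to\widetilde{A}$ is Galois with group $\langle F^m\rangle$, as the restriction of $\pi_m\colon D^b(\mathcal{H})\to\mathcal{C}_{F^m}(\mathcal{H})$ to the full subcategory $\mathcal{T}$, which is what we want.

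\textbf{Part (2).} The push-down functor $\widetilde{\pi}_m$ is the composite of the equivalence $\Hom_{D^b(\mathcal{H})}(\mathcal{T},-)\colon \frac{D^b(\mathcal{H})}{\add\,\mathcal{T}[1]}\xrightarrow{\sim}A_\infty\text{-}\mathrm{mod}$ — here I would invoke Theorem 2.5 applied in the (appropriately completed) $D^b(\mathcal{H})$ with the cluster tilting subcategory $\mathcal{T}$, noting $\mathcal{T}[1]=\add\{\tau^nT[1-n]\mid n\in\mathbf{Z}\}=\add\{\tau^nT[-n]\mid n\in\mathbf{Z}\}$ up to reindexing so the quotient is the one displayed in the statement — followed by the classical push-down functor $A_\infty\text{-}\mathrm{mod}\to\widetilde{A}\text{-}\mathrm{mod}$ associated to the Galois covering from Part (1). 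That push-down sends an $A_\infty$-module $M$ to $\bigoplus_{n}{}^{F^{mn}}M$ made into an $\widetilde{A}$-module; it is well known to be exact and to commute with the relevant functors. The only point to check is that $\Hom_{\mathcal{C}_{F^m}(\mathcal{H})}(\oplus F^iT,\pi_m(-))$ agrees with this composite, which follows from $\Hom_{\widetilde{A}}(\oplus F^iT,\pi_m Y)=\bigoplus_n\Hom_{D^b(\mathcal{H})}(\mathcal{T}',F^{mn}Y)$ being the push-down of $\Hom_{D^b(\mathcal{H})}(\mathcal{T},Y)$.

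\textbf{Part (3).} For the final assertion I would reduce to Theorem 2.5(2). By Theorem 4.6(2), $\oplus_{i=0}^{m-1}F^iT$ and $\oplus_{i=0}^{m-1}F^iT'$ are both cluster tilting objects in the triangulated category $\mathcal{C}_{F^m}(\mathcal{H})$, which by Proposition \ref{pr} has finite-dimensional Hom-spaces, Serre duality, and is Krull–Remak–Schmidt; hence Theorem 2.5(2) applies directly (over an algebraically closed $k$) and gives that $\widetilde{A}$ and $\widetilde{A}'$ have the same representation type. The main obstacle I anticipate is Part (2): one must be careful that the infinite-rank endomorphism ring $A_\infty$ and the quotient category $D^b(\mathcal{H})/\add\,\mathcal{T}[1]$ behave well enough for Theorem 2.5 to apply (it was stated for a cluster tilting \emph{object}, so one needs its subcategory version from [KZ]), and that the reindexing $\{F^iT[1]\}=\{\tau^{-i}T[i+1]\}$ versus $\{\tau^nT[-n]\}$ is matched correctly — this is bookkeeping with $F=\tau^{-1}[1]$ but it is where a sign or shift error would hide. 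Everything else is a formal consequence of the covering-functor machinery and the results already in hand.
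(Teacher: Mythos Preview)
Your proposal is correct and follows essentially the same route as the paper: in Part (1) you restrict $\pi_m$ to the cluster tilting subcategory $\mathcal{T}=\add\{F^iT\}$ and read off the Galois covering from the orbit-category Hom formula, in Part (2) you invoke the [KZ] equivalences $D^b(\mathcal{H})/\mathcal{T}[1]\simeq \mod(\mathcal{T})$ and $\mathcal{C}_{F^m}(\mathcal{H})/\pi_m(\mathcal{T})[1]\simeq \widetilde{A}\text{-}\mathrm{mod}$ and pass the covering through the quotients, and in Part (3) you cite the representation-type invariance (the paper cites Proposition 4.8 of [KZ], which is exactly Theorem 2.5(2) here). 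One small correction to the concern you flagged: your claim that $\mathcal{T}[1]=\add\{\tau^nT[-n]\}$ ``up to reindexing'' is not right, since $F\mathcal{T}=\mathcal{T}$ gives $\mathcal{T}[1]=\tau\mathcal{T}=\add\{\tau^{n+1}T[-n]\}\neq\mathcal{T}$; the discrepancy is a typo in the theorem's displayed denominator, and the paper's own proof works with $\mathcal{T}[1]$ throughout, as you do.
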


\begin{proof}
(1). Set $\mathcal{T}=add (\{\ F^i(T)\  |\  i\in \mathbf{Z}\  \})$.
$\mathcal{T}$ is a cluster tilting subcategory of
$D^b(\mathcal{H})$. Hence by Proposition 4.4 , $\pi_m (\mathcal{T})
$ is a cluster tilting object in $\mathcal{C}_{F^m}(\mathcal{H}).$
By Theorem 2.5, we have the equivalent functor
$Hom_{\mathcal{C}_{F^m}(\mathcal{H})}(\oplus_{i=0}^{i=m-1} \pi_m
(F^i(T)),-): \frac{\mathcal{C}_{F^m}(\mathcal{H})}{add(
\oplus_{i=0}^{i=m-1} \pi_m (F^i(T)))}\rightarrow \tilde{A}-mod$.
 Under this equivalence, the subcategory $add(\pi_m (\mathcal{T})) $
 correspondences to the subcategory of projective
 $\tilde{A}-$modules.

The projection $\pi _m$ sends $\mathcal{T}$ to $\pi_m(\mathcal{T})$.
Thus $(\pi_m)|_{\mathcal{T}}: \mathcal{T}\longrightarrow
\pi_m(\mathcal{T})$ is a Galois covering with Galois group generated
by $F^m$.

(2). By Theorem 3.3 and Corollary 4.4 in [KZ] there are equivalences
$D^b(H)/\mathcal{T}[1]\cong mod(\mathcal{T})$ and
$\mathcal{C}_m(\mathcal{H})/(\pi_m(\mathcal{T}[1]))\cong
mod(\pi_m(\mathcal{T}))$. We define the induced functor
$\bar{\pi}_m$ as follows: $\bar{\pi}_m (X):= \pi_m (X)$  for any
object $X\in D^b(H)/\mathcal{T}[1], $ and $\bar{\pi}_m
(\underline{f}):=\underline{\pi_{m} (f)}$ for any morphism
$\underline{f}: X\rightarrow Y$ in $D^b(H)/\mathcal{T}$. Clearly
$\bar{\pi}_m$ is well-defined and makes the following diagram
commutative:

\[ \begin{CD}
D^b(\mathcal{H})@>\pi_m>>\mathcal{C}_m(\mathcal{H})\\
@V P_1 VV@VV P_2 V \\
D^b(\mathcal{H})/\mathcal{T}[1]@>\bar{\pi}_m
>>\mathcal{C}_m(\mathcal{H})/\pi(\mathcal{T})[1].
    \end{CD} \]
Where $P_1, P_2$ are the natural quotient functors. Then
$\bar{\pi}_m$ is a covering functor from
$D^b(\mathcal{H})/\mathcal{T}[1]$ to
$\mathcal{C}_{F^m}/\pi(\mathcal{T}[1])$, i.e, it is a covering
functor from $D^b(\mathcal{H})/\mathcal{T}[1]$ to $\tilde{A}-mod$
 ($\approx mod (\pi_m(\mathcal{T}))$.

(3). This is direct consequence of Proposition 4.8 of [KZ].
\end{proof}

Similarly as above, the triangle covering functor $\rho_m :
\mathcal{C}_m(\mathcal{H})\rightarrow \mathcal{C(H)}$ induces a
covering functor from $\tilde{A}$ to the cluster-tilted algebra
$End_{\mathcal{C(H)}}T$ indicated as the following Theorem.

\begin{thm} Let $T$ be a tilting object in $\mathcal{H}$, $A=End_{\mathcal{C(H)}}T$ and $\widetilde{A}=
End_{\mathcal{C}_{F^m}(\mathcal{H})}(\oplus_{i=0}^{i=m-1}F^iT) $
 the generalized cluster-tilted algebra.
 \begin{enumerate}

\item   $\rho_m : \mathcal{C}_{F^m}(\mathcal{H})
\rightarrow \mathcal{C}(H)$ restricts to the cluster tilting
subcategory $add (\bigcup _{i=0}^{i=m-1}F^iT)$ induces a Galois
covering of $A$.

\item The functor $\rho _m $ also induces a push-down functor
 $\tilde{\rho _m}: \tilde{A}-mod \longrightarrow A-mod$.
\end{enumerate}

\end{thm}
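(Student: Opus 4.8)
The plan is to mirror the proof of Theorem 4.7, replacing the covering functor $\pi_m$ by $\rho_m$ and $D^b(\mathcal{H})$ by $\mathcal{C}_{F^m}(\mathcal{H})$ throughout. For part (1), first I would set $\mathcal{T}_m=\mathrm{add}(\bigoplus_{i=0}^{m-1}F^iT)$, which by Theorem 4.6(2) is a cluster tilting object in $\mathcal{C}_{F^m}(\mathcal{H})$; applying $\rho_m$ and Proposition 4.4 (the $m=1$ case, i.e. Lemma 4.14 of [KZ]) one gets that $\rho_m(\mathcal{T}_m)=\mathrm{add}\,T$ is a cluster tilting object in $\mathcal{C(H)}$, so $A=\mathrm{End}_{\mathcal{C(H)}}T$ is its cluster-tilted algebra. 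Then, invoking Theorem 2.5, the functors $\mathrm{Hom}_{\mathcal{C}_{F^m}(\mathcal{H})}(\bigoplus_{i=0}^{m-1}F^iT,-)$ and $\mathrm{Hom}_{\mathcal{C(H)}}(T,-)$ identify the categories of projective $\widetilde{A}$- and $A$-modules with $\mathrm{add}(\mathcal{T}_m)$ and $\mathrm{add}\,T$ respectively. The restriction $\rho_m\colon \mathcal{T}_m\to \mathrm{add}\,T$ is then a covering; since $\rho_m$ was already shown in Proposition 4.3(2) to be a covering functor with Galois group $\langle F\rangle/\langle F^m\rangle$ (a cyclic group of order $m$), the restriction is a Galois covering of $A$ with this group. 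One should note that $F$ fixes $\mathrm{add}\,T$ in $\mathcal{C(H)}$ (Lemma 2.4) and permutes the summands $F^iT$ of $\mathcal{T}_m$, so the group action is well-defined on $\mathcal{T}_m$ and this makes the covering Galois.

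For part (2), I would argue exactly as in the proof of Theorem 4.7(2): by Theorem 2.5 there are equivalences $\mathcal{C}_{F^m}(\mathcal{H})/(\mathcal{T}_m[1])\cong \widetilde{A}\text{-}\mathrm{mod}$ and $\mathcal{C(H)}/(\rho_m(\mathcal{T}_m)[1])\cong A\text{-}\mathrm{mod}$. One defines the induced functor $\bar\rho_m$ by $\bar\rho_m(X)=\rho_m(X)$ on objects and $\bar\rho_m(\underline f)=\underline{\rho_m(f)}$ on morphisms; one checks this is well-defined, i.e. that $\rho_m$ sends $\mathrm{add}(\mathcal{T}_m[1])$ into $\mathrm{add}(\rho_m(\mathcal{T}_m)[1])$, and that a morphism factors through $\mathcal{T}_m[1]$ iff its image factors through $\rho_m(\mathcal{T}_m)[1]$ — this uses that $\rho_m$ is a covering and $\langle F\rangle/\langle F^m\rangle$ acts transitively on the fibres while preserving the relevant subcategories. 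The resulting square, with vertical arrows the quotient functors $P_1,P_2$, commutes, and since $\rho_m$ is a covering, so is $\bar\rho_m$; transported through the two equivalences, $\bar\rho_m$ becomes the desired push-down functor $\widetilde{\rho_m}\colon \widetilde{A}\text{-}\mathrm{mod}\to A\text{-}\mathrm{mod}$.

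The main obstacle I anticipate is verifying that $\bar\rho_m$ is well-defined at the level of quotient categories, namely the "if and only if" for morphisms factoring through the shifted cluster tilting subcategories. In the $\pi_m$ case (Theorem 4.7) this rested on $\pi_m$ being a covering functor so that $\mathrm{Hom}$ in the orbit category is literally the direct sum of $\mathrm{Hom}$'s upstairs; for $\rho_m$ one has to be a little careful because neither source nor target is a derived category and the identification of $\rho_m(\mathcal{T}_m)$ as a single cluster tilting object (rather than a subcategory with $m$ times as many orbits) means the fibre of $\rho_m$ over an object of $\mathrm{add}\,T$ meets $\mathcal{T}_m$ in all $m$ translates $F^iT$. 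Once one checks that these $m$ translates are exactly the $\langle F\rangle/\langle F^m\rangle$-orbit and that the covering property of $\rho_m$ from Proposition 4.3(2) is strong enough (i.e. it is a Galois covering, not merely a covering), the argument goes through verbatim as in Theorem 4.7. The representation-type statement analogous to Theorem 4.7(3) is not claimed here, so no further work is needed.
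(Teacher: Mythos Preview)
Your proposal is correct and follows essentially the same route as the paper's own proof: both parts mirror Theorem 4.7 with $\rho_m$ in place of $\pi_m$, identifying $\rho_m^{-1}(T)=\bigoplus_{i=0}^{m-1}F^iT$ as a cluster tilting object via Theorem 4.5, using Theorem 2.5 to match projectives with the cluster tilting subcategories, and then defining $\bar{\rho}_m$ on the quotient categories to obtain the push-down functor. Your extra caution about well-definedness of $\bar{\rho}_m$ is warranted but not a departure---the paper simply asserts ``Clearly $\bar{\rho}_m$ is well-defined'' at the corresponding step.
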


\begin{proof} The strategy of the proof is almost the same as that of Theorem 4.7, we
present it here for the convenient of reader.

 (1). By Theorem 4.5, $\rho^{-1}_m(T)$ is a cluster tilting object in $\mathcal{C}_{F^m}(\mathcal{C})$, and
 $\rho^{-1}_m(T)=\oplus_{i=1}^{i=m-1}F^i(T)$.  By Theorem 2.5, we have the
equivalent functor
$Hom_{\mathcal{C}_{F^m}(\mathcal{H})}(\rho^{-1}_m(T),-):
\frac{\mathcal{C}_{F^m}(\mathcal{H})}{add(\rho^{-1}_m(T))}\rightarrow
\tilde{A}-mod$.
 Under this equivalence, the subcategory $add(\rho^{-1}_m(T)) $
 correspondences to the subcategory of projective
 $\tilde{A}-$modules.

The triangle functor $\rho _m$ sends add$\rho_m^-(T)$ to
 add$T$. Thus $\rho _m|_{add\rho_m^-(T)}:
add\rho_m^-(T)\longrightarrow addT$ is a Galois covering with Galois
group $Z_m$.

(2). By Theorem 3.3 and Corollary 4.4 in [KZ], there is an
 equivalence $\mathcal{C}_m(\mathcal{H})/(add\rho^{-1}_m(T)[1])
 \cong \tilde{A}-mod$. We define the induced functor $\bar{\rho}_m$
as follows: $\bar{\rho}_m(X):= \rho_m (X)$  for any object $X\in
\mathcal{C}_m(\mathcal{H})/(\rho_m^-(T))[1], $ and
$\bar{\rho}_m(\underline{f}):=\underline{\rho_{m} (f)}$ for any
morphism $\underline{f}: X\rightarrow Y$ in
$\mathcal{C}_m(\mathcal{H})/(\rho^{-1}_m(T))$. Clearly $\bar{\rho}
_m$ is well-defined and makes the following diagram commutative:

\[ \begin{CD}
\mathcal{C}_{F^m}(\mathcal{H}))@>\rho_m>>\mathcal{C}(\mathcal{H})\\
@VP'_1 VV@VV P'_2 V \\
\mathcal{C}_{F^m}(\mathcal{H})/add(\rho^{-1}_m(T))[1])@>\bar{\rho}
_m
>>\mathcal{C}(\mathcal{H})/add(T[1]).
    \end{CD} \]
Where $P'_1, P'_2$ are the natural quotient functors. Then
$\bar{\rho}_m$ is a covering functor from
$\mathcal{C}_m(\mathcal{H})/(add\rho_m^{-1}(T)[1])$ to
$\mathcal{C}/add(T[1])$, i.e, it is a covering functor from
$\tilde{A}-mod$ to $A-mod$.
\end{proof}

\begin{rem}
 By Theorem 3.1., see also [ABS,Zh], the cluster-tilted algebra $A$ of type $\mathcal{H}$ can be
 written as a trivial extension $A=B\ltimes M$, where $M=Ext^2_B(DB,B)$. Then $A$ has as
$\mathbf{Z}-$covering the following (infinite dimensional) matrix
algebra (i.e. the repetitive algebra):
  $$A_{\infty}=\left [\begin{array}{ccccc}\ddots&&&&\\
  \ddots&B&&& \\
  &M&B&&\\
  &&M&B&\\
  &&&\ddots&\ddots \end{array}\right ]$$

$A=B\ltimes M$ is also a $Z_m-$graded algebra. Then $A$ has a
$Z_m-$covering $A\sharp Z_m$, the smash product of graded algebra
$A$ with group $Z_m$.

\end{rem}

\textbf{Examples}

\begin{enumerate}

\item   Let $D^b(H)$ be the (bounded) derived category of hereditary
algebra $H$, where $H$ is the path algebra of the quiver
:\begin{center}
 \setlength{\unitlength}{0.61cm}
 \begin{picture}(5,4)
 \put(0,2){a}\put(0.4,2.2){$\circ$}
\put(3,2.2){$\circ$}\put(3.4,2){b}
\put(6,2.2){$\circ$}\put(6.4,2){c}

\put(0.8,2.5){\vector(3,0){2}}

\put(3.8,2.5){\vector(3,0){2}}

 \end{picture}
 \end{center}

 If we take $\mathcal{T}$ to
be the subcategory generated by $\{ \tau^{-n}P_a[n],
\tau^{-n}S_a[n], \tau^{-n}P_c[n]\mid n \in Z\},$ then $\mathcal{T}'$
is also a cluster tilting subcategory of $D^b(H)$ and
$D^b(H)/\mathcal{T}\cong A_{\infty}$ where $A_{\infty}$ is algebra
of quiver
$$A^{\infty}_{\infty}:   \cdots \circ \longrightarrow \circ
\longrightarrow \circ \longrightarrow\cdots $$ with $rad^2=0$ [KZ].

\item Let $m=1$. We consider the cluster category
 $\mathcal{C}(H)$. If we take $T=P_a\oplus P_c\oplus S_a$,  then $T$ is a cluster
tilting object of  $\mathcal{C}(H)$ and
 $\mathcal{C}(H)/(addT)\cong A$ where $A$ is an algebra
of quiver

\begin{center}
 \setlength{\unitlength}{0.61cm}
 \begin{picture}(5,5)
 \put(0.4,2.2){$\circ$}
\put(3,2.2){$\circ$}
 \put(1.6, 5.2){$\circ$}

\put(0.8,2.5){\vector(3,0){2}}

\put(3,2.8){\vector(-1,2){1.1}}

 \put(1.4,5){\vector(-1,-2){1.1}}

 \end{picture}
 \end{center}

 with
$rad^2=0.$

\item Let $m=2$. We consider the generalized cluster category
 $\mathcal{C}_{F^2}(A)$. If we take $\mathcal{T}'$ to be the
subcategory generated by $\{ \tau^{-n}P_a[n], \tau^{-n}S_a[n],
\tau^{-n}P_c[n]\mid n=0, 1 \},$ then $\mathcal{T}'$ is a cluster
tilting subcategory of  $\mathcal{C}_{F^2}(A)$ and
 $\mathcal{C}_{F^2}(A)/\mathcal{T}\cong A_{1}$ where $A_{1}$ is an algebra
of quiver
$$Q_1:   \begin{array}{ccccc} \circ& \longrightarrow& \circ&
\longrightarrow& \circ\\
\uparrow&&&&\downarrow\\

\circ& \longleftarrow& \circ& \longleftarrow& \circ
\end{array}$$

 with
$rad^2=0.$

\item Let $m=3$. We consider the generalized cluster category
 $\mathcal{C}_{F^3}(A)$. If we take $\mathcal{T}'$ to be the
subcategory generated by $\{ \tau^{-n}P_a[n], \tau^{-n}S_a[n],
\tau^{-n}P_c[n]\mid n=0, 1 ,2 \},$ then $\mathcal{T}'$ is a cluster
tilting subcategory of  $\mathcal{C}_{F^3}(A)$ and
 $\mathcal{C}_{F^3}(A)/\mathcal{T}\cong A_{2}$ where $A_{2}$ is an algebra
of quiver
$$Q_2:   \begin{array}{ccccccccc} \circ& \longrightarrow& \circ&
\longrightarrow& \circ& \longrightarrow& \circ&\longrightarrow& \circ\\
\uparrow&&&&&&& \swarrow &\\
\circ& \longleftarrow& \circ& \longleftarrow& \circ&
\longleftarrow&\circ&
\end{array}$$

 with
$rad^2=0.$

\end{enumerate}

%\newpage

\end{document}